\title
[Grasper families and barbell diffeomorphisms]
{Grasper families of spheres in $S^2\tm D^2$ and barbell diffeomorphisms of $S^1\tm S^2\tm I$}
\author[E.~Fern\'andez]{Eduardo Fern\'andez}
    \address{Department of Mathematics, University of Georgia, Athens, GA 30602, USA}
    \email{eduardofernandez@uga.edu}
\author[D.~T.~Gay]{David T. Gay}
    \address{Department of Mathematics, University of Georgia, Athens, GA 30602, USA}
    \email{dgay@uga.edu}
\author[D.~Hartman]{Daniel Hartman}
    \address{Max Planck Institute for Mathematics,Vivatsgasse 7, 53111 Bonn, Germany }
    \email{hartman@mpim-bonn.mpg.de}
\author[D. Kosanovi\'c]{Danica Kosanovi\'c}
    \address{ETH Z\"urich, Department of Mathematics, R\"amistrasse 101, 8092 Z\"urich, Switzerland}
    \email{danica.kosanovic@math.ethz.ch}
\begin{document}

\begin{abstract}
    We show that the fundamental group of framed circles in $S^1\tm D^3$ injects into the fundamental group of framed spheres in $S^2\tm D^2$, so that the cokernel is the fundamental group of framed neat disks in $D^4$. In particular, grasper families of circles give rise to countably many nontrivial families of spheres. Ambient extensions of either of these two types of families induce the same barbell diffeomorphisms of $S^1\tm S^2\tm I$.
    We give two proofs that these diffeomorphisms are nontrivial and pairwise distinct. This implies infinite generation of the abelian group of isotopy classes of diffeomorphisms of $S^1\tm S^2\tm I$ that are pseudo-isotopic to the identity, recovering a result of Singh.
\end{abstract}

\maketitle

\section{Introduction}\label{sec:intro}
This paper investigates the space $\Diffp(S^1 \tm S^2 \tm I)$ of diffeomorphisms of $S^1 \tm S^2 \tm I$ that are the identity near the boundary, and its relationship to $\Emb(\nu S^1, S^1 \tm D^3)$ and $\Emb(\nu S^2, S^2 \tm D^2)$, the spaces of framed embeddings of a circle in $S^1\tm D^3$ and a sphere in $S^2\tm D^2$, respectively. The natural starting point in this study are the sequences
\begin{equation}\label{eq-intro:fibrations}
\begin{tikzcd}[row sep=5pt]
    \Diffp(S^1\tm S^2\tm I) \arrow[r,"\cup^{\nu c}_0"] 
    & \Diffp(S^1\tm D^3) \arrow[r,"\res_{\nu c}"] 
    & \Emb(\nu S^1, S^1\tm D^3),\\
    \Diffp(S^1\tm S^2\tm I) \arrow[r,"\cup^{\nu S}_1"]
    & \Diffp(S^2\tm D^2) \arrow[r,"\res_{\nu S}"]
    & \Emb(\nu S^2, S^2\tm D^2).
\end{tikzcd}
\end{equation}
The map $\res_{e}$ is a Cerf–Palais fibration: it restricts a diffeomorphism to a fixed framed circle $e=\nu c$ or framed sphere $e=\nu S$. In both cases, the fiber of $\res_{e}$ can be identified with $\Diffp(S^1\tm S^2\tm I)$ so that the map $\cup^{e}_{i}$ is the extension by the identity along the manifold $e$, attached along the boundary component $S^1\tm S^2 \tm \{i\}$ for $i=0$ or $i=1$.
\begin{mainthm}\label{mainT:diagram}
    There is a commutative diagram of \emph{trivial extensions of abelian groups}:
    \[\begin{tikzcd}[nodes={scale=0.93}]
        \ker(i^{\nu c}_0)
            \dar[tail]
            \rar[tail,two heads]{\cong}
        &  \pi_1(\Emb(\nu S^1,S^1\tm D^3);\nu c)
            \dar[tail]{\delta^{\nu c}}
        & 
        \\ 
        \pi_1(\Emb(\nu S^2,S^2\tm D^2);\nu S)
            \rar[tail]{\delta^{\nu S}}
            \dar[two heads]{i^{\nu c}_0}
        & \pi_0\Diffp(S^1\tm S^2\tm I)
            \rar[two heads]{\cup^{\nu S}_1}
            \dar[two heads]{\cup^{\nu c}_0}
        & \pi_0\Diffp(S^2\tm D^2)
            \dar[tail,two heads]
        \\
        \pi_1\Emb_*(\nu S^2, S^4)
            \rar[tail]
        & \pi_0\Diffp(S^1\tm D^3)
            \rar[two heads]
        & \pi_0\Diff(S^4)
    \end{tikzcd}
    \]
    where $\pi_1\Emb_*(\nu S^2,S^4)\cong\pi_1\Embp(\nu D^2,D^4)$ is the subgroup of $\pi_1\Emb(\nu S^2,S^4)$ of index two, realized by the families of spheres in which one hemisphere is fixed throughout. 
\end{mainthm}

This diagram arises by putting together the long exact sequences of homotopy groups of fibrations~\eqref{eq-intro:fibrations}, so that $i_0^{\nu c}$ is the induced map on the kernels of the bottom right square. The connecting maps $\delta^e$ are induced from ambient isotopy extension, and showing their injectivity is the main task in the proof. This follows from the fact that both $c$ and $S$ are isotopic into the boundary of the ambient manifold. We refer to Section~\ref{subsec:setup} for details. 

Only one of the groups appearing in Theorem~\ref{mainT:diagram} has been computed in the literature so far: there is an isomorphism 
\begin{equation}\label{eq-intro:pi1S1D3} 
    \pi_1(\Emb(\nu S^1, S^1 \tm D^3); \nu c) 
    \cong \Z \tm \Z/2 \tm \Z^\infty, 
\end{equation} 
where $\Z$ corresponds to rotations of the circle and $\Z/2$ to rotations of the framings, and $\Z^\infty$ is generated by \emph{grasper families $\lambda^{c\mG_k}_\bull$ on $c$}, that are shown in Figure~\ref{F-intro}(ii). This result follows from computations by Budney and Gabai~\cite{BG} using embedding calculus, or alternatively by the fourth author and Teichner~\cite{KT-4dLBT,K-Dax,K-Dax2} using Dax’s study of \emph{metastable} homotopy groups of embedding spaces; see also Theorem~\ref{T:pi1-circles}. 

Crucial for all those computations is that the codimension in $\Emb(\nu S^1, S^1\tm D^3)$ is three, whereas the situation is more mysterious for the codimension two embedding space $\Emb(\nu S^2, S^2\tm D^2)$. However, we use Theorem~\ref{mainT:diagram} to show the following.
\begin{mainthm}
\label{mainT:infinite-subgroups}
\begin{itemize}
    \item
    There is an injective homomorphism of abelian groups
    \[
        trade\colon\pi_1(\Emb(\nu S^1,S^1\tm D^3);\nu c) \into \pi_1(\Emb(\nu S^2,S^2\tm D^2);\nu S)
    \]
    whose cokernel is isomorphic to $\pi_1\Embp(\nu D^2, D^4)$. 
    The subgroup $\Z \tm \Z/2 \tm \Z^\infty<\pi_1\Emb(\nu S^2,S^2\tm D^2)$ has $\Z$ generated by rotations of the framings and $\Z/2$ by rotations of the sphere, and $\Z^\infty$ by \emph{grasper families $\lambda^{S\mG_k}_\bull$ on $S$} from Figure~\ref{F-intro}(iii).

\item
    Moreover, there is an injective homomorphism of abelian groups
    \[
        \pi_1(\Emb(\nu S^1,S^1\tm D^3);\nu c) \into \pi_0\Diffp(S^1\tm S^2\tm I),
    \]
    whose cokernel is isomorphic to $\pi_0\Diffp(S^1\tm D^3)$.
    The subgroup $\Z \tm \Z/2 \tm \Z^\infty< \pi_0\Diffp(S^1\tm S^2\tm I)$ has $\Z$ generated by the Dehn twist on $S^1$ and $\Z/2$ by the Dehn twist on $S^2$, and $\Z^\infty$ by \emph{barbell diffeomorphisms} $\phi^{\mB_k}$ from Figure~\ref{F-intro}(i).
\end{itemize}
\end{mainthm}
\begin{figure}[!htbp]
\begin{minipage}{0.327\linewidth}
   \labellist                             
    \tiny\hair 2pt
    \pinlabel \blue{$k$} at 40 42
    \pinlabel \blue{$S_1$} at 130 52
    \pinlabel \blue{$S_2$} at 310 52
    \pinlabel $S$ at 10 140
    \pinlabel $c$ at 240 46
\endlabellist
    
   \centering
   \includegraphics[width=0.95\linewidth]{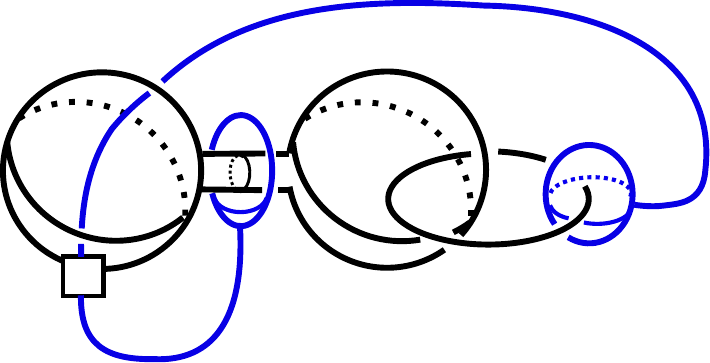}
\end{minipage}
\begin{minipage}{0.327\linewidth}
\labellist                             
    \tiny\hair 2pt
    \pinlabel \blue{$k$} at 40 42
    \pinlabel \blue{$S_1$} at 130 52
    \pinlabel \blue{$S_2$} at 310 52
    \pinlabel $S$ at 10 140
    \pinlabel $c$ at 240 46 
\endlabellist

   \centering
    \includegraphics[width=0.95\linewidth]{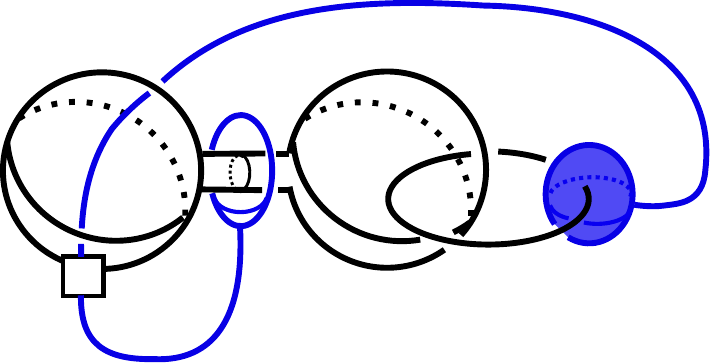}
\end{minipage}
\begin{minipage}{0.327\linewidth}
\labellist                             
    \tiny\hair 2pt
    \pinlabel \blue{$k$} at 40 42
    \pinlabel \blue{$S_1$} at 130 52
    \pinlabel \blue{$S_2$} at 310 52
    \pinlabel $S$ at 10 140
    \pinlabel $c$ at 240 46 
\endlabellist
  
   \centering
    \includegraphics[width=0.95\linewidth]{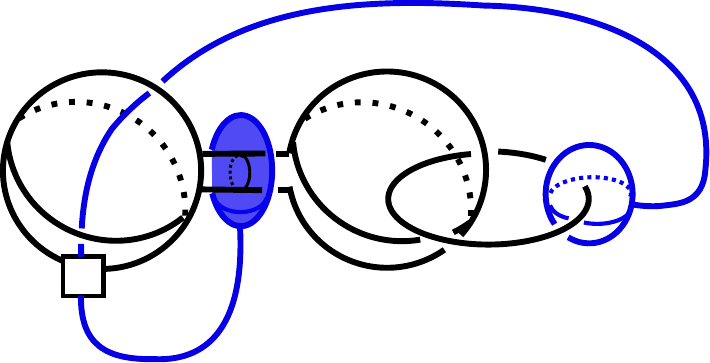}
\end{minipage}
\caption[The barbell, the grasper on a circle, the grasper on a sphere.]{
In each of three pictures we have a Hopf link $S\sqcup c\colon S^2\sqcup S^1\into S^4$ in black; its fixed thickening $\nu(S\sqcup c)$ is not drawn.
   
(i) The support of the diffeomorphism $\phi^{\mB_k}$ of $S^1\tm S^2\tm I=S^4\sm\nu (S\sqcup c)$ is the \emph{barbell} $\mB_k\subset S^1\tm S^2\tm I$. This is a thickening of the union of the blue spheres $S_1$ and $S_2$ and the bar connecting them (which links $k\geq1$ times with $S$).

(ii) The grasper $c\mG_k$ is the union of $\mB_k$ and the ball $Q_2$ bounded by $S_2$. The family $\lambda^{c\mG_k}_t\colon \nu S^1\into S^1\tm D^3$ drags the part $\nu c\cap Q_2$ along the bar until it touches $S_1$, then twirls it around $S_1$, and then goes back.

(iii) The grasper $S\mG_k$ is the union of $\mB_k$ and the ball $Q_1$ bounded by $S_1$. The family $\lambda^{S\mG_k}_t\colon\nu S^2\into S^2\tm D^2$ drags the part $\nu S\cap Q_1$ along the bar until it touches $S_2$, then twirls its around $S_2$, and then goes back.}
   \label{F-intro}
\end{figure}

The map $trade$ in Theorem~\ref{mainT:infinite-subgroups} is related to \emph{trading 1-2 to 2-3 pairs of $5$-dimensional handles} in pseudo-isotopies of $S^4$, as discussed by the second author in \cite{Gay}; we do not explicitly consider this aspect in the present paper.

Barbell diffeomorphisms were introduced by Budney and Gabai~\cite{BG}. The key property of $\phi^{\mB_k}$ is that its extension by the identity under the inclusion $S^1\tm S^2\tm I\subset S^1\tm D^3$ admits a null-isotopy. When applied to $\nu c$, this null-isotopy precisely recovers the grasper family $\lambda^{c\mG_k}_\bull$. Similarly, extending $\phi^{\mB_k}$ by the identity in $S^1\tm S^2\tm I\subset S^2\tm D^2$ yields the family $\lambda^{S\mG_k}_\bull$ when applied to $\nu S$.

Most of Theorem~\ref{mainT:infinite-subgroups} follows from Theorem~\ref{mainT:diagram} and \eqref{eq-intro:pi1S1D3}, as explained in Section~\ref{sec:diagram}. The additional part is the explicit description of the generators, given in Section~\ref{sec:classes}. However, in this approach to Theorem~\ref{mainT:infinite-subgroups}, the nontriviality of the $\Z^{\infty}$ factors in both the mapping class group of $S^1 \tm S^2 \tm I$ and the fundamental group of framed 2-spheres in $S^2 \tm D^2$ hinges on the presence of this factor in the group $\pi_1\Emb(\nu S^1, S^1 \tm D^3)$, by~\eqref{eq-intro:pi1S1D3}. 

This leads us to the second main result of this paper: we provide an alternative proof that these three groups are infinitely generated, without relying on previous metastable computations. Instead, our approach uses the work on pseudo-isotopies of Hatcher and Wagoner~\cite{HW}, as well as Singh ~\cite{Singh}, to demonstrate that barbell diffeomorphisms of $S^1 \tm S^2 \tm I$ represent distinct classes. That is, combining the following Theorem \ref{mainT:PI} with Theorem~\ref{mainT:diagram} gives another proof of Theorem~\ref{mainT:infinite-subgroups}, that does not use the computation~\eqref{eq-intro:pi1S1D3}.

\begin{mainthm}\label{mainT:PI}
    The barbell diffeomorphisms $\phi^{\mB_k}\in\pi_0\Diffp(S^1\tm S^2\tm I)$ for $k\geq1$ are nontrivial and pairwise distinct. They are pseudo-isotopic to the identity and detected by the Hatcher--Wagoner second obstruction $\Theta$.
\end{mainthm}

Previously, Singh~\cite{Singh} used $\Theta$ to show that $\pi_0\Diffp(S^1\tm S^2\tm I)$ contains an infinitely generated free abelian subgroup of classes that are pseudo-isotopic but not isotopic to the identity. Our Theorem \ref{mainT:PI} should be seen as a refinement of Singh's result that gives explicit representatives of these classes.

\subsection*{Outline}

We prove Theorem~\ref{mainT:diagram} in Section~\ref{sec:diagram} relying on Cerf--Palais fibrations~\eqref{eq-intro:fibrations} and the previous work of the last author with Peter Teichner~\cite{KT-4dLBT,KT-mcg}.

In Section~\ref{sec:classes} we prove Theorem~\ref{mainT:infinite-subgroups} by defining $\phi^{\mB_k}$, $\lambda^{c\mG_k}_\bull$, $\lambda^{S\mG_k}_\bull$, and observing relations between them. We define the model barbell $\sB$ as the complement in $D^4$ of two disjoint framed neat arcs $\alpha^1\sqcup\alpha^2$, and the model grasper $\sG$ as the complement of one of them. Thus, $\sB\subset\sG$ in two different ways. A barbell diffeomorphism has support in the image of a barbell $\mB\colon\sB\into X$, where it agrees with the barbell map $b\colon\sB\to\sB$. The map $b$ is the restriction of is a map $b_1\colon\sG\to\sG$ that preserves $\alpha^1$. This in turn is the endpoint of an ambient isotopy $b_\bull$ that extends the loop of arcs $\alpha^1_\bull\colon D^1 \into \sG$, known as the \emph{twirling family}. It is exactly the twirling family that gives grasper families of circles $\lambda^{c\mG_k}_\bull$, and its key feature is that the movement of $c$ can be traded for the movement of $S$, giving $\lambda^{S\mG_k}_\bull$.

Turning now to the proof of Theorem~\ref{mainT:PI} in Section~\ref{sec:PI}, we fix $k\geq1$ and consider the barbell diffeomorphism $\phi^{\mB_k}$. The first goal is to construct an explicit family
\[
    R_t\colon S^2\into S^1\tm S^2\tm I \# S^2\tm S^2
\]
with $R_0=R_1=R=S^2\tm \{p\}$, whose trace determines a pseudo-isotopy $\Phi$ from $\phi^{\mB_k}$ to the identity. This correspondence between the loop $R_\bull$ and the pseudo-isotopy $\Phi$ is derived from Cerf's work and will not be described here (see \cite{cerf1970,HW,Gay,gabai2024,GGHKP} for more details). Once the loop $R_\bull$ is constructed, we rely on Singh's work~\cite{Singh} to compute the Hatcher--Wagoner invariant $\Theta(\Phi)$, by looking at the intersection locus of $R_\bull$ with the fixed sphere $G = \{p\}\tm S^2$.

In order to define $R_\bull$ we first construct in Section~\ref{subsec:BB-to-S2} a path $\wt{\phi}_\bull$ of diffeomorphisms of $S^1\tm S^2\tm I \# S^2\tm S^2$ that starts at the identity $\wt{\phi}_0=\id$ and ends at a certain diffeomorphism $\wt{\phi}_1=\wt{\phi}$. What is important is that $\wt{\phi}$ is isotopic to $\phi^{\mB_k}$ after performing surgery to $S^2\tm \{p\}\subset S^1\tm S^2\tm I \# S^2\tm S^2$; in other words, $\wt{\phi}_t$ is a stable isotopy for $\phi^{\mB_k}$ (see Section~\ref{subsec:BB-to-double-BB}). From this path we extract the desired family of 2-spheres by simply defining $R_t\coloneqq\wt{\phi}_t(R)$. We compute its Hatcher--Wagoner obstruction in Section~\ref{subsec:compute}, after briefly recalling the work of Hatcher, Wagoner, and Singh in Section~\ref{subsec:HWS}.

\subsection*{Acknowledgments}
DH and DK are thankful to ETH Zurich for excellent working conditions and generous support for DH's visit.

\section{The diagram}\label{sec:diagram}

\subsection{The setup}\label{subsec:setup}
Let $S$ denote an unknotted 2-sphere in $S^4$ and $c$ its meridian circle as in Figure~\ref{F-intro}, so that the pair $S\sqcup c$ forms a standard Hopf pair in $S^4$. We fix a tubular neighborhood $\nu(S\sqcup c)$ and the identifications
\[
    S^4\sm \nu(c\sqcup S) \dashrightarrow S^1\tm S^2\tm I,\quad
    S^4\sm \nu c \dashrightarrow S^2\tm D^2,\quad
    S^4 \sm \nu S \dashrightarrow S^1\tm D^3,
\] 
with $I\coloneqq[0,1]$, in such a way that the following cube commutes:
\[
\begin{tikzcd}[row sep=1.3em, column sep = 1.5em,nodes={scale=0.93}]
    S^4\sm \nu(c\sqcup S) 
        \arrow[rr, dashed] \arrow[dr, swap,"\cup\nu S"] 
        \arrow[dd,swap, "\cup\nu c"] 
    && S^1\tm S^2\tm I 
        \arrow[dd, "\cup_0{\nu c}", near end] 
        \arrow[dr,"\cup_1{\nu S}"] 
    \\
    & S^4\sm \nu c 
        \arrow[rr,dashed]
        \arrow[dd,swap,near start,"\cup\nu c"] 
    && S^2\tm D^2 
        \arrow[dd,"\cup_{\partial}{\nu c}"] 
    \\
    S^4\sm \nu S 
        \arrow[rr,dashed] 
        \arrow[dr, "\cup\nu S"] 
    && S^1\tm D^3 
        \arrow[dr,"\cup_{\partial}{\nu S}"] 
    \\
    & S^4 
        \arrow[rr,dashed,"\id"] 
    && S^4
    \end{tikzcd}
\]
The maps $\cup\nu c$ and $\cup\nu S$ are the natural inclusions, whereas the maps on the right are induced by those inclusions under the fixed identifications. Extending diffeomorphisms by the identity on these subspaces gives the commutative diagram of spaces:
\begin{equation}\label{eq:diagram-of-spaces}
    \begin{tikzcd}[nodes={scale=0.93}]
        \Diffp(S^1\tm S^2\tm I)
            \rar{\cup_1^{\nu S}}\dar{\cup_0^{\nu c}}
        & \Diffp(S^2\tm D^2)
            \rar{\res_{\nu S}}\dar{\cup_\partial^{\nu c}}
        & \Emb(\nu S^2, S^2\tm D^2)\dar\\
        \Diffp(S^1\tm D^3)
            \rar{\cup_\partial^{\nu S}}\dar{\res_{\nu c}}
        & \Diff(S^4)
            \rar{\res^{S^4}_{\nu S}}\dar{\res^{S^4}_{\nu c}}
        & \Emb(\nu S^2,S^4)\\
        \Emb(\nu S^1,S^1\tm D^3)\rar
        & \Emb(\nu S^1,S^4)
    \end{tikzcd}
\end{equation}
The maps $\res$ restrict to appropriate subspaces $\nu S\cong S^2\tm D^2$ or $\nu c\cong S^1\tm D^3$, and are fibrations by Cerf~\cite{Cerf-plongements} and Palais~\cite{Palais}. The fibers consist of diffeomorphisms that are the identity on that subspace, so each column and each row in this diagram is a fibration sequence. 
Therefore, we can form a diagram that intertwines the corresponding four long exact sequences of homotopy groups. The following theorem says that the bottom parts of these long exact sequences reduce to trivial short exact sequences.
\begin{thm}\label{T:main-diagram}
    There is a commutative diagram of abelian groups
    \[
    \begin{tikzcd}[nodes={scale=0.93}]
        & \pi_1\Embp(\nu S^1,S^1\tm D^3)
            \dar[tail,shift left]{\delta_{\nu c}^{S^1\tm D^3}}
            \rar{i_0}
        & \pi_1\Emb_*(\nu S^1,S^4)
            \dar[tail,shift left]{\delta_{\nu c}^{S^4}}
        \\ 
        \pi_1\Embp(\nu S^2,S^2\tm D^2)
            \rar[tail,shift left]{\delta_{\nu S}^{S^2\tm D^2}}
            \dar{i_1}
        & \pi_0\Diffp(S^1\tm S^2\tm I)
            \rar[two heads,shift left]{\cup_0^{\nu S}}
            \dar[two heads,shift left]{\cup_1^{\nu c}}
        & \pi_0\Diffp(S^2\tm D^2)
            \dar[two heads,shift left]{\cup_\partial^{\nu c}}
        \\
        \pi_1\Emb_*(\nu S^2, S^4)
            \rar[tail,shift left]{\delta_{\nu S}^{S^4}} 
        & \pi_0\Diffp(S^1\tm D^3)
            \rar[two heads,shift left]{\cup_\partial^{\nu S}}
        & \pi_0\Diff^+(S^4)
    \end{tikzcd}
    \]
    with each row and each column a trivial group extension. For both $k=1,2$ we have $\pi_1\Emb(\nu S^k,S^4)\cong\Z/2\tm\pi_1\Emb_*(\nu S^k,S^4)$ and $\pi_1\Emb_*(\nu S^k,S^4)\cong\pi_1\Embp(\nu D^k,D^4)$.
\end{thm}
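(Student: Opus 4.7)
The diagram of Theorem~\ref{T:main-diagram} is assembled from the long exact sequences of homotopy groups attached to the rows and columns of \eqref{eq:diagram-of-spaces}, each of which is a Cerf--Palais fibration sequence. Commutativity follows from naturality of the connecting homomorphism applied to squares of fibrations. The content of the theorem is that each displayed three-term row/column is a \emph{short} exact sequence -- so $\delta$ is injective and $\cup$ is surjective -- and that this short exact sequence splits as a direct product of abelian groups.

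Surjectivity of the $\cup$ maps is equivalent to the vanishing of the restriction $\pi_0\Diffp(E) \to \pi_0\Emb(\nu e,E)$ for each ambient $E$ and submanifold $e \in \{c,S\}$ appearing in the diagram. Geometrically this says that any (boundary-fixing) diffeomorphism of $E$ can be arranged, up to isotopy, to fix a standard $c$ or $S$ on a tubular neighborhood. For the unknotted Hopf pair $S\sqcup c \subset S^4$ and its restrictions to $S^1\tm D^3$ and $S^2\tm D^2$, this is a light-bulb-style unknottedness statement which can be extracted from the authors' prior work with Teichner \cite{KT-4dLBT,KT-mcg} together with standard positioning arguments for unknotted spheres in $4$-manifolds.

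The principal task is the injectivity of $\delta$, equivalently surjectivity of the restriction map $\pi_1\Diffp(E) \twoheadrightarrow \pi_1\Emb(\nu e, E)$. The crucial geometric input is that both $c$ and $S$ are isotopic into $\partial E$. Given a loop $\gamma_\bull$ of framed embeddings of $e$ in $E$, I would fix a collar of $\partial E$ and an isotopy that pushes $e$ into a copy of $\partial E$ sitting inside that collar; conjugating $\gamma_\bull$ by this isotopy produces a loop of embeddings whose image lies entirely in the collar. By parametric ambient isotopy extension, such a loop is realized by a loop of diffeomorphisms of $E$ supported in the collar, and these lie automatically in $\Diffp(E)$. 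Carrying this through while tracking framings (the $\nu$ decoration) and the various pointed/unpointed conventions ($\Embp$, $\Emb_*$, $\Emb$) is the main technical obstacle, and the part to which the reference to Section~\ref{subsec:setup} is really devoted.

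With short exactness established, each SES of abelian groups splits by exhibiting a section of the $\cup$ surjection: for the bottom row, a class in $\pi_0\Diff^+(S^4)$ is lifted to $\pi_0\Diffp(S^1\tm D^3)$ via the unknottedness argument above, and the other rows and columns are handled analogously, the sections also being visible from the explicit Dehn-twist generators coming from the $S^1$ or $S^2$ factor. Finally, the identifications $\pi_1\Emb(\nu S^k,S^4) \cong \Z/2 \tm \pi_1\Emb_*(\nu S^k,S^4)$ and $\pi_1\Emb_*(\nu S^k,S^4) \cong \pi_1\Embp(\nu D^k,D^4)$ come respectively from the basepoint evaluation fibration $\Emb_*(\nu S^k,S^4) \to \Emb(\nu S^k,S^4) \to S^k$ (with a visible section giving the splitting) and from the compatible decompositions $S^k = D^k_+\cup D^k_-$ and $S^4 = D^4_+\cup D^4_-$, under which a based loop of spheres rel the fixed hemisphere $D^k_+$ is exactly a loop of neatly embedded framed disks in $D^4_-$. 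The main obstacle throughout is the injectivity of $\delta$; everything else is formal once that is in place.
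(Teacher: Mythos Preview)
Your proposal has the right overall shape but contains two genuine gaps.

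\textbf{Injectivity of $\delta$.} Your argument is: push $e$ into a collar of $\partial E$, conjugate the loop $\gamma_\bull$ by this isotopy, and then lift. But conjugating a loop of embeddings by an isotopy only moves the \emph{basepoint} into the collar; for intermediate $t$ the embedding $\gamma_t$ can wander arbitrarily far into the interior of $E$, so there is no reason the loop is ``realized by a loop of diffeomorphisms of $E$ supported in the collar''. Parametric isotopy extension does not help here: it only tells you the restriction map is a fibration, which is exactly what you started with. The paper does not argue this way. Instead it invokes Theorem~\ref{T:LBT} (the Kosanovi\'c--Teichner Light Bulb Trick), which is a substantial result asserting directly that when $K$ is isotopic into $\partial X$ one has a \emph{split} extension
\[
\pi_1\Emb(\nu\Sigma,X)\rightarrowtail \pi_0\Diffp(X\sm\nu K)\twoheadrightarrow\pi_0\Diffp(X),
\]
with an explicit section $\sigma_{\nu K}$. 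This handles the middle row and middle column simultaneously; semidirect products become direct because $\pi_0\Diffp(Y\times I)$ is abelian by an Eckmann--Hilton argument (a point you also skip). For the bottom row and right column the target $S^4$ is closed, so Theorem~\ref{T:LBT} does not apply directly; the paper reduces to the $D^4$ case via the hemisphere inclusion $w\colon D^4\hookrightarrow S^4$ and the isomorphism $\pi_0\Diffp(D^4)\cong\pi_0\Diff^+(S^4)$, then applies Theorem~\ref{T:LBT} for neat disks in $D^4$.

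\textbf{The $\Z/2$ factor.} Your ``basepoint evaluation fibration $\Emb_*(\nu S^k,S^4)\to\Emb(\nu S^k,S^4)\to S^k$'' does not exist as stated: $\Emb_*$ here means embeddings with a \emph{hemisphere} fixed, not a basepoint, and in any case evaluation lands in (a frame bundle over) $S^4$, not $S^k$, so neither reading yields $\Z/2$. In the paper the $\Z/2$ comes from $\pi_1 V_4(S^4)\cong\pi_1 SO_5$, via the derivative-at-a-point fibration $\Diffp(D^4)\to\Diff^+(S^4)\to V_4(S^4)$; the generator is the rotation of $S^4$ about an axis through the basepoint, and it is this $\Z/2\subset\pi_1\Diff^+(S^4)$ that hits $\pi_1\Emb(\nu S^k,S^4)$ and complements the subgroup $\pi_1\Emb_*(\nu S^k,S^4)\cong\pi_1\Embp(\nu D^k,D^4)$. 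Your hemisphere decomposition for the latter isomorphism is correct in spirit and matches the paper's map $w(-)\cup e(U)$.
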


\subsection{The proof of Theorem~\ref{T:main-diagram}}\label{subsec:proof}
We will use the following result of the last author and Peter Teichner.
\begin{thm}[{\cite{KT-4dLBT}, \cite{KT-mcg}}]\label{T:LBT}
    Whenever $K\colon\Sigma^k\into X$ is isotopic into the boundary of $X$, then there is an explicit split group extension:
    \[\begin{tikzcd}
        \pi_1\Embp(\nu \Sigma,X);\nu K)
            \rar[tail]{\delta_{\nu K}}
        & \pi_0\Diffp(X\sm\nu K)
            \rar[two heads,shift left]{\cup^{\nu K}}
        & \pi_0\Diffp(X),
            \lar[tail,shift left]{\sigma_{\nu K}}
    \end{tikzcd}
    \]
\end{thm}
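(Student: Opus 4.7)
The plan is to extract the extension from the low-degree segment of the long exact sequence of the Cerf--Palais fibration
\[
    \Diffp(X\sm\nu K)\to\Diffp(X)\xrightarrow{\res_{\nu K}}\Emb(\nu\Sigma,X;\nu K),
\]
namely
\[
    \pi_1\Diffp(X)\xrightarrow{\res_*}\pi_1\Embp(\nu\Sigma,X;\nu K)\xrightarrow{\delta_{\nu K}}\pi_0\Diffp(X\sm\nu K)\xrightarrow{\cup^{\nu K}}\pi_0\Diffp(X)\to\pi_0\Emb(\nu\Sigma,X).
\]
By exactness, the claimed split short exact sequence reduces to three things: $\res_*$ vanishes on $\pi_0$ (so $\cup^{\nu K}$ is surjective), $\res_*$ vanishes on $\pi_1$ (so $\delta_{\nu K}$ is injective), and there is a group-theoretic section $\sigma_{\nu K}$ of $\cup^{\nu K}$. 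All three will follow from a single geometric device: since $K$ is isotopic into $\partial X$, I would fix an ambient isotopy $F_\bull\colon X\to X$ with $F_0=\id$ such that $F_1(\nu K)$ sits inside a prescribed boundary collar $\partial X\tm[0,\varepsilon)$.

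For the splitting, given $\phi\in\Diffp(X)$ first isotope $\phi$ within $\Diffp(X)$ to arrange that it is the identity on a collar of $\partial X$ large enough to contain $F_1(\nu K)$; this is always possible, since conjugating $\phi$ by a diffeomorphism that shrinks the existing identity collar of $\phi$ towards $\partial X$ effectively expands the region on which the conjugated map is the identity. The conjugate $F_1^{-1}\phi F_1$ is then the identity on $\nu K$ and defines a class $\sigma_{\nu K}[\phi]\in\pi_0\Diffp(X\sm\nu K)$; the family $F_t^{-1}\phi F_t$ is an isotopy in $\Diffp(X)$ realizing $\cup^{\nu K}\circ\sigma_{\nu K}=\id$. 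This simultaneously yields surjectivity of $\cup^{\nu K}$, the splitting, and, as a corollary, the vanishing of $\res_*$ on $\pi_0$.

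The parametric version handles $\pi_1$. A loop $\Phi_\bull$ in $\Diffp(X)$ based at $\id$ can be modified within its homotopy class so that every $\Phi_t$ is the identity on a uniform collar containing $F_1(\nu K)$, using a uniform lower bound on the existing identity collars (by compactness of $S^1$) and a single conjugating diffeomorphism. The homotopy of loops $H_{s,t}\coloneqq F_s^{-1}\Phi_t F_s$ then satisfies $H_{1,t}\circ\nu K=F_1^{-1}\Phi_t(F_1\circ\nu K)=F_1^{-1}(F_1\circ\nu K)=\nu K$ for all $t$, so the restricted loop is constant at $s=1$. Hence $\res_*[\Phi_\bull]=0$ in $\pi_1\Embp(\nu\Sigma,X;\nu K)$, as required.

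The main obstacle will be parametric coherence: verifying that the shrinking-conjugation trick can be performed uniformly in families and continuously in $\phi$, that $\sigma_{\nu K}$ is independent of the auxiliary choices (the isotopy $F_\bull$, the preliminary isotopies of $\phi$, the identification of the tubular neighborhood) and descends to a well-defined \emph{group} homomorphism on $\pi_0$, and that the framed/pointed variant $\Embp$ and the possible boundary of $\Sigma$ are handled correctly throughout. These technicalities are the substance of the proofs in~\cite{KT-4dLBT,KT-mcg}.
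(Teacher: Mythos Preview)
The paper does not itself prove Theorem~\ref{T:LBT}; it quotes the result from \cite{KT-4dLBT,KT-mcg} and only adds the remark that the neat disk case is the Light Bulb Trick of \cite{KT-4dLBT} while the general closed case is deferred to \cite{KT-mcg}. So there is nothing in the paper to compare your argument to directly.

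That said, your sketch is the standard line of argument and is essentially what underlies the cited results: one feeds the Cerf--Palais fibration into the long exact sequence and then uses an ambient isotopy pushing $\nu K$ into a boundary collar to (i) conjugate any $\phi\in\Diffp(X)$ to one fixing $\nu K$, giving surjectivity of $\cup^{\nu K}$ and a set-theoretic section, and (ii) do the same parametrically for loops to kill $\res_*$ on $\pi_1$, giving injectivity of $\delta_{\nu K}$. Your identification of the remaining work---making the collar-shrinking conjugation coherent in families, checking independence of choices, and verifying that $\sigma_{\nu K}$ is a group homomorphism---is exactly where the substance of \cite{KT-4dLBT,KT-mcg} lies, and you are right to flag it rather than claim it is obvious.
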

    The version where $\Sigma=D^k$ and $K$ is neat is the Light Bulb Trick of \cite{KT-4dLBT}.
    The version for general submanifolds will appear in~\cite{KT-mcg}. Note that in general these group extensions will not be trivial. See also \cite[Lem.5.15]{KT-4dLBT}. 

We will also need the following facts, known to the experts. Note that any manifold of the shape $Y\tm I$ admits a stacking operation $Y\tm I\cup_{Y\tm\{1\}=Y\tm\{0\}} Y\tm I\cong Y\tm I$. Thus, given two diffeomorphisms of $Y\tm I$ we can stack them next to each other.

\begin{lem}
    For any manifold $Y$ the standard group structure on $\pi_0\Diffp( Y\tm I)$ is equivalent to the one obtained by stacking of diffeomorphisms. Moreover, this group is abelian.
\end{lem}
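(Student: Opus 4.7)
The approach is the classical Eckmann--Hilton argument: one equips $\pi_0\Diffp(Y\tm I)$ with a second unital binary operation, \emph{stacking} $\star$, and uses its interchange with composition to force both operations to coincide and to be commutative.

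To set up stacking, fix diffeomorphisms $s_0\colon Y\tm I\xrightarrow{\cong}Y\tm[0,\tfrac{1}{2}]$ and $s_1\colon Y\tm I\xrightarrow{\cong}Y\tm[\tfrac{1}{2},1]$ that are the identity on the $Y$-factor and affine on $I$. Given $\phi,\psi\in\Diffp(Y\tm I)$, define $\phi\star\psi$ to equal $s_0\phi s_0^{-1}$ on $Y\tm[0,\tfrac{1}{2}]$ and $s_1\psi s_1^{-1}$ on $Y\tm[\tfrac{1}{2},1]$. Since $\phi$ and $\psi$ are the identity near $\partial(Y\tm I)$, both conjugates are the identity near the interface $Y\tm\{\tfrac{1}{2}\}$ and near $\partial Y\tm I$, so $\phi\star\psi$ lies in $\Diffp(Y\tm I)$. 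The operation is continuous in both arguments, and $\phi\star\mathrm{id}$ is isotopic to $\phi$: one conjugates $\phi$ by a path of self-embeddings $s_0^t\colon Y\tm I\hookrightarrow Y\tm I$ interpolating between $s_0^0=s_0$ and $s_0^1=\mathrm{id}_{Y\tm I}$ (with images $Y\tm[0,\tfrac{1+t}{2}]$), and extends by the identity outside. Analogously, $\mathrm{id}\star\psi$ is isotopic to $\psi$. Thus $\star$ descends to a binary operation on $\pi_0\Diffp(Y\tm I)$ with the same identity $[\mathrm{id}]$ as composition.

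The key observation is the interchange law
\[
(\phi_1\star\phi_2)\circ(\psi_1\star\psi_2)\,=\,(\phi_1\circ\psi_1)\star(\phi_2\circ\psi_2),
\]
which already holds on the nose at the level of diffeomorphisms, since both sides restrict to $s_i(\phi_i\circ\psi_i)s_i^{-1}$ on $Y\tm[\tfrac{i}{2},\tfrac{i+1}{2}]$ for $i=0,1$. The Eckmann--Hilton conclusion is then formal: setting $\phi_2=\psi_1=\mathrm{id}$ and using the unit laws yields $[\phi_1]\circ[\psi_2]=[\phi_1]\star[\psi_2]$, so the two operations agree on $\pi_0$; setting instead $\phi_1=\psi_2=\mathrm{id}$ yields $[\phi_2]\circ[\psi_1]=[\psi_1]\star[\phi_2]=[\psi_1]\circ[\phi_2]$, so the group is abelian.

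The main (minor) obstacle is verifying that $\star$ descends to a well-defined unital operation on $\pi_0$ --- once this is in place, the interchange law is immediate and the rest is pure algebra. The only geometric input required is the shrinking isotopy that identifies $\phi$ with its conjugate $s_0\phi s_0^{-1}$ extended by the identity; this is automatic for any manifold $Y$, including non-compact ones, because the embeddings in question differ only by affine reparametrization of the $I$-factor.
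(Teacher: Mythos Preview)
Your proof is correct and takes essentially the same approach as the paper: both set up a stacking operation compatible with composition and run the Eckmann--Hilton argument. The only difference is cosmetic---you write stacking via explicit conjugation by affine reparametrizations and verify the interchange law on the nose, whereas the paper isotopes supports into disjoint halves and phrases the interchange as ``distributivity''---but the underlying idea is identical.
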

\begin{proof}
    Since diffeomorphisms $\varphi_1,\varphi_2\in\Diffp(Y\tm I)$ are the identity on a neighbourhood of the boundary, we can isotope $\varphi_2$ to be supported on $Y\tm [0,1/2]$ and $\varphi_1$ in $Y\tm[1/2,1]$. In other words, there are isotopies from $\varphi_2$ to $\varphi_2\otimes\Id$ and from $\varphi_1$ to $\Id\otimes\varphi_1$. Then $\varphi_2\circ\varphi_1$ is isotopic to $(\varphi_2\otimes\Id)\circ(\Id\otimes\varphi_1)$. These stacking operation and composition are clearly distributive, so we have
    \[
        \varphi_2\circ\varphi_1=(\varphi_2\circ\Id)\otimes(\Id\circ\varphi_1)=\varphi_2\otimes\varphi_1.
    \]
    To see that $\pi_0\Diffp(Y\tm I)$ is abelian note that we could have isotoped the supports the other way. In other words, this is an Eckmann-Hilton argument:
    \[
        \varphi_2\circ\varphi_1=
        (\Id\otimes\varphi_2)\circ(\varphi_1\otimes\Id)=
        (\Id\circ\varphi_1)\otimes(\varphi_2\circ\Id)
        =\varphi_1\otimes\varphi_2=\varphi_1\circ\varphi_2.\qedhere
    \]
\end{proof}

We are now ready to prove Theorem~\ref{T:main-diagram}.

\begin{proof}[Proof of Theorem~\ref{T:main-diagram}]
    The diagram is obtained by applying long exact sequences in homotopy groups to the diagram of spaces~\eqref{eq:diagram-of-spaces}. It follows from Theorem~\ref{T:LBT} that the middle row and middle column form semi-direct products. However, all the mapping class groups in the diagram are abelian by the last lemma, so they are in fact just products!

    For the rightmost column and the bottom row the target manifold is now closed, namely $S^4$, so we cannot use Theorem~\ref{T:LBT}. However, the inclusion $w\colon D^4\into S^4$ of the western hemisphere induces an isomorphism $\pi_0\Diffp(D^4)\cong\pi_0\Diff^+(S^4)$, and Theorem~\ref{T:LBT} does give the corresponding sequence in the 4-ball. Therefore, for $k=1$ or $k=2$ we can form the commutative diagram
\[\begin{tikzcd}[nodes={scale=0.91}]
        \pi_2V_4(S^4)
            \dar\rar[equals]
        & \pi_2V_4(S^4)=\pi_2SO_4=0
            \dar &&\\
        \pi_1\Diffp(D^4)
            \rar{0}\dar
        & \pi_1(\Embp(\nu D^{3-k},D^4);\nu U)
            \rar[tail]{\delta_{\nu U}}
            \dar{w(-)\cup e(U)}
        & \pi_0\Diffp(S^k\tm D^{4-k})
            \rar[two heads,shift left]{\cup h^{k+1}}
            \dar[equals]
        & \pi_0\Diffp(D^4)
            \lar[tail,shift left]{\sigma_{\nu U}}
            \dar{\cong}[swap]{w}\\
        \pi_1\Diff^+(S^4)
            \rar\dar
        & \pi_1(\Emb(\nu S^{3-k},S^4);\nu S)
            \rar{\delta_{\nu S}}
            \dar
        & \pi_0\Diffp(S^k\tm D^{4-k})
            \rar[two heads]{\cup^{\nu S}}
        & \pi_0\Diff^+(S^4)\\
        \pi_1 V_4(S^4)
            \rar[equals]
        & \pi_1 V_4(S^4)
        &&
    \end{tikzcd}
\]
    Here $\cup h^{k+1}$ extends by the identity over a $(k+1)$-handle, and has a section by Theorem~\ref{T:LBT}. We fix a basepoint $U\colon D^{3-k}\into D^4$ and the corresponding basepoint $S=w(U)\cup e(U)\subset w(D^4)\cup e(D^4)=S^4$. The diagram commutes because if a diffeomorphism of $S^4$ fixes the eastern hemisphere $e(D^4)$, then its restriction to $S\colon S^2\into S^4$ fixes $S(S^2)\cap e(D^4)$, so comes from $D^2\into D^4$ by applying $w$ and capping with a fixed disk $e(U)$.
    
    Let us observe that $V_4(S^4)\cong SO_5$, since the unit vector that determines a point $p\in S^4$ completes a normal 4-frame at $p$ to a 5-frame in the ambient $\R^5$. The map $\pi_1\Diff^+(S^4)\to\pi_1V_4(S^4)\cong\pi_1SO_5\cong\Z/2$, that takes the derivative at the basepoint $e\in S^4$, has a section: send $1\in\Z/2$ to the rotation of $S^4$ around the axis through $e$.
    
    By the commutativity of the diagram we conclude that there are group isomorphisms
        \begin{equation}\label{eq:sphere-subgroup}
            \pi_1(\Emb(\nu S^k,S^4);\nu S)\cong\Z/2\tm\pi_1(\Embp(\nu D^k,D^4);\nu U),
        \end{equation}
    with $\Z/2$ hit by $\pi_1\Diff^+(S^4)$, whereas the subgroup $\pi_1(\Emb_*(\nu S^k,S^4);\nu S)$ of spheres with a fixed hemisphere is isomorphic to $\pi_1(\Embp(\nu D^k,D^4);\nu U)$ and survives under the connecting map $\delta_{\nu S}$ to $\pi_0\Diffp(S^k\tm D^{4-k})$. This concludes the proof of the theorem.
\end{proof}

\subsection{Consequences}
The groups in the leftmost column of Theorem~\ref{T:main-diagram} are fundamental groups of codimension two embedding spaces, and thus hard to compute.
On the other hand, the groups in the top row for codimension three embeddings, and are known:
\begin{align*}
    \pi_1(\Embp(\nu S^1,S^1\tm D^3),\nu c) &\cong \Z\tm \Z/2\tm \Z^\infty, \\
    \pi_1(\Emb_*(\nu S^1,S^4),\nu c) &\cong \pi_1(\Embp(\nu D^1,D^4);\nu c)\cong 1.
\end{align*}
For the former see \eqref{eq-intro:pi1S1D3} above or Theorem~\ref{T:pi1-circles} below.
Using the latter and \eqref{eq:sphere-subgroup} we find
\[
    \pi_1(\Embp(\nu S^1,S^4);\nu c)\cong \Z/2.
\]
Combining all of this finishes the proof of Theorem~\ref{mainT:diagram}. 
We leave it as an exercise for the reader to give a slightly different proof of Theorem~\ref{mainT:diagram}, using the neat case of Theorem~\ref{T:LBT} and the fibration $\res\colon\Diffp(S^1\tm D^3)\to \Embp(D^3,S^1\tm D^3)$ in place of $\res^{S^4}_{\nu c}$. The proof given above has the advantage in that it shows that we can move the $\Z/2$ from the kernel down to $\pi_1\Emb(\nu S^2, S^4)$, giving the following diagram.
\begin{cor}\label{cor:main}
    There is an exact diagram of abelian groups, with all injections split:
    \[
    \begin{tikzcd}[nodes={scale=0.93}]
        &&&\Z/2
            \dar[tail,two heads]\\
        & \Z^\infty\tm\Z
            \rar[tail,two heads]
            \dar[tail]
        & \pi_1\Embp(\nu S^1,S^1\tm D^3)
            \dar[tail]
            \rar{}
        & \pi_1\Emb(\nu S^1,S^4)
            \dar{0}
        \\ 
        &\pi_1\Embp(\nu S^2,S^2\tm D^2)
            \rar[tail]
            \dar[two heads]
        & \pi_0\Diffp(S^1\tm S^2\tm I)
            \rar[two heads]
            \dar[two heads]
        & \pi_0\Diffp(S^2\tm D^2)
            \dar[tail,two heads]
        \\
        \Z/2
            \rar[tail]
        & \pi_1\Emb(\nu S^2, S^4)
            \rar 
        & \pi_0\Diffp(S^1\tm D^3)
            \rar[two heads]
        & \pi_0\Diff^+(S^4)
    \end{tikzcd}
    \]
\end{cor}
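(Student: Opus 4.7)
Most of the corollary follows by substituting the preceding paragraph's computations into Theorem~\ref{T:main-diagram}: the middle row, the middle column, the bottom row, the split surjection $\pi_0\Diffp(S^2\tm D^2)\twoheadrightarrow\pi_0\Diff^+(S^4)$, and splitness of all injections are inherited directly. The new content is (i)~the top row, namely the claim that the inclusion-induced map $\pi_1\Embp(\nu S^1,S^1\tm D^3)\to\pi_1\Emb(\nu S^1,S^4)\cong\Z/2$ is surjective with kernel abstractly $\Z\tm\Z^\infty$; (ii)~the vanishing of the right-column connecting map $\pi_1\Emb(\nu S^1,S^4)\to\pi_0\Diffp(S^2\tm D^2)$, which upgrades the adjacent surjection to an isomorphism; and (iii)~the existence of the leftmost-column surjection $\pi_1\Embp(\nu S^2,S^2\tm D^2)\twoheadrightarrow\Z/2$.

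The plan for~(i) is to exploit the derivative detection $\pi_1\Emb(\nu S^1,S^4)\to\pi_1 V_4(S^4)\cong\pi_1 SO_5\cong\Z/2$ already used in the proof of Theorem~\ref{T:main-diagram}. The rotation-of-framings $\Z/2$-summand from~\eqref{eq-intro:pi1S1D3} has derivative a loop in $SO_3\subset SO_5$, which is the nontrivial element of $\pi_1 SO_5$ because $SO_3\hookrightarrow SO_5$ is a $\pi_1$-isomorphism. The rotation-of-circle $\Z$-summand has derivative a loop in $SO_2\subset SO_5$, mapping to $\pi_1 SO_5$ via the mod-two reduction $\Z\twoheadrightarrow\Z/2$. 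Both images are nontrivial, so the composite map is surjective and the kernel is index-two in $\Z\tm\Z/2\tm\Z^\infty$; since the torsion summand is not contained in the kernel, the kernel is torsion free of countably infinite rank, hence abstractly $\Z\tm\Z^\infty$.

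For~(ii) I would apply the LES of the Cerf--Palais fibration $\Diffp(S^2\tm D^2)\to\Diff(S^4)\to\Emb(\nu S^1,S^4)$. The proof of Theorem~\ref{T:main-diagram} realizes the generator of $\pi_1\Emb(\nu S^1,S^4)$ by a loop of ambient rotations of $S^4$, i.e., a lift along $\pi_1\Diff^+(S^4)$, so by exactness the next connecting homomorphism vanishes. Exactness at $\pi_0\Diffp(S^2\tm D^2)$ then makes the map to $\pi_0\Diff^+(S^4)$ injective, and combined with the surjectivity from Theorem~\ref{T:main-diagram} this is an isomorphism. The argument for~(iii) is parallel: the rotation-of-framings generator of $\pi_1\Embp(\nu S^2,S^2\tm D^2)$ from Theorem~\ref{mainT:infinite-subgroups} has derivative a loop in $SO_2\subset SO_5$, which under the decomposition~\eqref{eq:sphere-subgroup} hits the $\Z/2$-summand of $\pi_1\Emb(\nu S^2,S^4)$, giving the required surjection onto $\Z/2$.

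The main obstacle is the framing bookkeeping in~(i): I must verify that every torsion class in $\pi_1\Embp(\nu S^1,S^1\tm D^3)$ becomes nontrivial under inclusion into $S^4$, so that the kernel comes out torsion free rather than retaining the $\Z/2$-summand. Once this is done, commutativity of the full diagram is just the naturality of the long exact sequences of the Cerf--Palais fibrations in~\eqref{eq:diagram-of-spaces}, and the abelianness lemma of this section promotes the various split extensions to direct-product decompositions.
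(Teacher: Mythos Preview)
Your argument is correct and follows the paper's own approach: the corollary is stated there as an immediate consequence of the proof of Theorem~\ref{T:main-diagram}, whose diagram involving $\pi_1 V_4(S^4)\cong\pi_1 SO_5\cong\Z/2$ already shows that for $k=1,2$ the extra $\Z/2$ in $\pi_1\Emb(\nu S^k,S^4)$ is hit by ambient rotations in $\pi_1\Diff^+(S^4)$ and hence lies in the kernel of the connecting map --- exactly what your derivative checks in (i)--(iii) establish. Your residual worry about the framing bookkeeping is already resolved by your own observation that the rotation-of-framings $\Z/2$ has nontrivial derivative in $\pi_1 SO_3\hookrightarrow\pi_1 SO_5$; the only cosmetic point is that the bottom row is not literally the one in Theorem~\ref{T:main-diagram} (it uses $\pi_1\Emb(\nu S^2,S^4)$ rather than $\pi_1\Emb_*(\nu S^2,S^4)$), but its exactness follows from the $k=2$ case of your argument (ii) by the identical reasoning.
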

In particular, from Theorem~\ref{mainT:diagram} we obtain the previously known isomorphisms
\begin{align*}
    \pi_0\Diffp(S^2\tm D^2) 
            &\cong\pi_0\Diff(S^4)\cong\pi_0\Diffp(D^4),\\
        \pi_0\Diffp(S^1\tm D^3) 
            &\cong \pi_1\Emb_*(\nu S^2,S^4)\tm \pi_0\Diff(S^4),
\end{align*}
whereas combining Theorem~\ref{mainT:diagram} with the computation~\eqref{eq-intro:pi1S1D3} implies some new ones:
\begin{align*}
    \pi_0\Diffp(S^1\tm S^2\tm I)
        & \cong \pi_1\Emb(\nu S^2,S^2\tm D^2)\tm\pi_0\Diffp(S^2\tm D^2)\notag\\
        & \cong \Z\tm\Z/2\tm\Z^\infty\tm \pi_0\Diffp(S^1\tm D^3),\\
    \pi_1(\Emb(\nu S^2,S^2\tm D^2);\nu S)
        & \cong \Z\tm\Z/2\tm\Z^\infty\tm\pi_1\Embp(\nu D^2, D^4)\notag\\
        & \cong \Z\tm\Z^\infty\tm\pi_1\Emb(\nu S^2, S^4).
\end{align*}
In order to finish the proof of Theorem~\ref{mainT:infinite-subgroups} it remains to describe the generators corresponding to the $\Z^\infty$ factors, which is done in the next section (see Section~\ref{subsec:identify}).

\section{The classes}\label{sec:classes}

In this section we define our barbell and grasper classes, relying on the previous work of many authors~\cite{BG,Gay,GH,KT-4dLBT,K-graspers-mcg}. In Section~\ref{subsec:models} we define the model barbell $\sB$ and the barbell map $b\colon\sB\to\sB$, the model grasper $\sG$ and the twirling family $\alpha_\bull\colon\nu D^1\into\sG$, and describe the effect of $b$ on the midball $\Delta\subseteq\sB$. In Section~\ref{subsec:barbells-graspers} we define barbells $\mB$, barbell diffeomorphisms $\phi^\mB$, and grasper families $\lambda^{c\mG_k}_\bull$ and $\lambda^{S\mG_k}_\bull$. Moreover, Theorem~\ref{T:pi1-circles} explains~\eqref{eq-intro:pi1S1D3}. Finally, in Section~\ref{subsec:identify} we finish the proof of Theorem~\ref{mainT:infinite-subgroups}.

\subsection{Models}\label{subsec:models}
\begin{defn}[Model barbell, model grasper]\label{D:models}
\hfill
\begin{itemize}
\item 
    Fix a framed embedding $\nu(\alpha^1\sqcup\alpha^2)\colon \nu(D^1\sqcup D^1)\into D^4$ and define the \emph{model barbell}
    \[
        \sB\coloneqq D^4\sm\nu(\alpha^1\sqcup\alpha^2).
    \]
    For concreteness we parametrize this as follows. Let $D^4=E\tm D^1$ for the ellipsoid $E\subset\R^3$ as in Figure~\ref{F:models} and the time axis $s\in D^1$. Let $\nu\alpha^i=Q_i\tm D^1$ for the open unit balls $Q_i$ centered at $(\pm2,0,0)\in E$ as in the picture, $i=1,2$. Then $\sB=(E\sm (Q_1\sqcup Q_2))\tm D^1$ is the union for $s\in D^1$ of the spaces
    \[
    \sB_s=(E\sm (Q_1\sqcup Q_2))\tm\{s\}.
    \]
\item 
    The spheres $S_i=\partial \ol{Q}_i$ are called the \emph{cuffs}, and the arc $\{(x,0,0): -1\leq x \leq 1\}$ connecting them the \emph{bar}. 
    Thus, $\sB$ is a 4-dimensional thickening of the union of the cuffs and the bar (colored in blue in Figure~\ref{F:models}).
\begin{figure}[!htbp]
    \centering
    \includegraphics[width=0.3\linewidth]{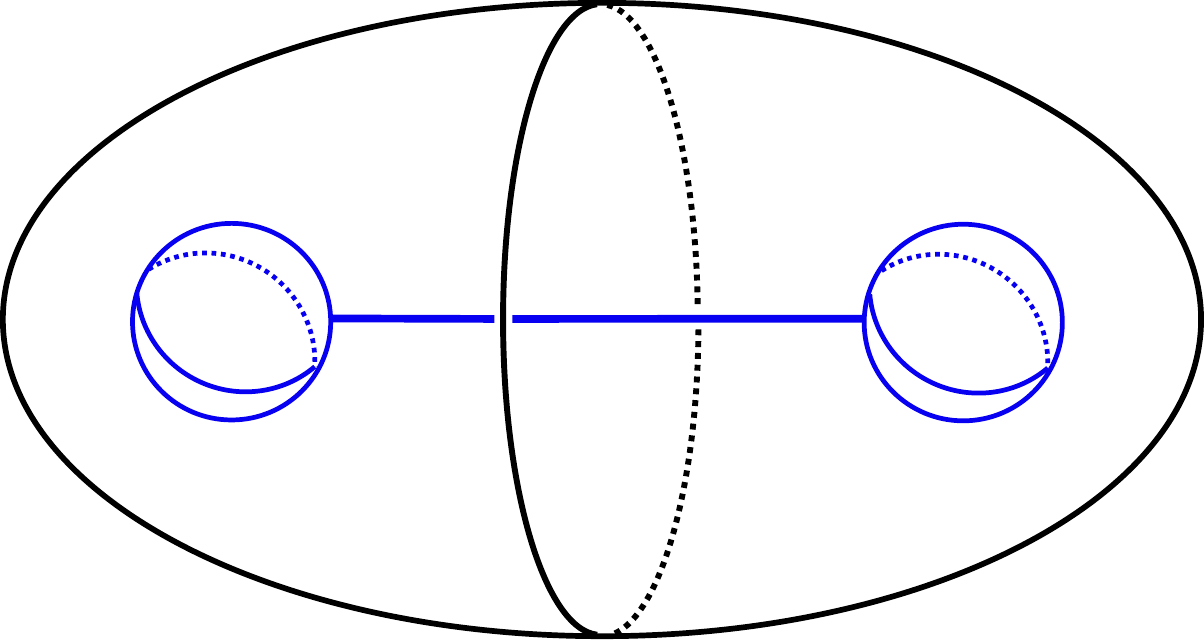}
    \caption{The subspace $\sB_0\subset\sB$ contains the cuffs and the bar in blue. The model barbell $\sB$ is the product of this space with $D^1$.}
    \label{F:models}
\end{figure}
\item
    Define the \emph{midball} $\Delta\subset\sB$ as the 3-ball $\Delta\coloneqq \sB\cap\{x=0\}$. Equivalently, $\Delta=\bigcup_{s\in D^1}\Delta_s$ for the vertical disks $\Delta_s=\sB_s\cap\{x=0\}$, as in the top row of Figure~\ref{F:midball}.
\item 
    Define the \emph{model grasper} $\sG$ as the union of the model barbell $\sB$ and the ball $Q_1$ bounded by the cuff $S_1$. We denote the remaining cuff by $L\coloneqq S_2$ and call it the \emph{leaf}. Thus, $\sG=(E\sm Q_2)\tm D^1$ and there is a canonical inclusion $fill_1\colon\sB\into\sG$. 
\item 
    We also have an inclusion $fill_2\colon\sB\into\sG$ by filling in the ball $Q_2$ of the cuff $S_2$, and then isotoping the remaining cuff $S_1$ to agree with the leaf in $\sG$.
\qedhere
\end{itemize}
\end{defn}
\begin{defn}[Twirling family, barbell map]\label{D:twirl}
\hfill
\begin{itemize}
\item
    We define an isotopy of $\nu \alpha^1\colon \nu D^1\cong Q_1\tm D^1\into \sG$ called the \emph{twirling family}
    \[
        \nu \alpha^1_t\colon \nu D^1\into \sG,
    \]
    as a thickening of the following isotopy of the arc $\alpha^1$ rel boundary. As in Figure~\ref{F:twirl}, we first drag the arc along the bar to reach a neighborhood of the sphere $L$, then twirl it around a parallel copy of $L$ using its foliation by meridians, and then return the arc along the bar back where it started, $\alpha^1_0=\alpha^1_1=\alpha^1$.
\begin{figure}[!htbp]
    \centering
     \includegraphics[width=.7\linewidth]{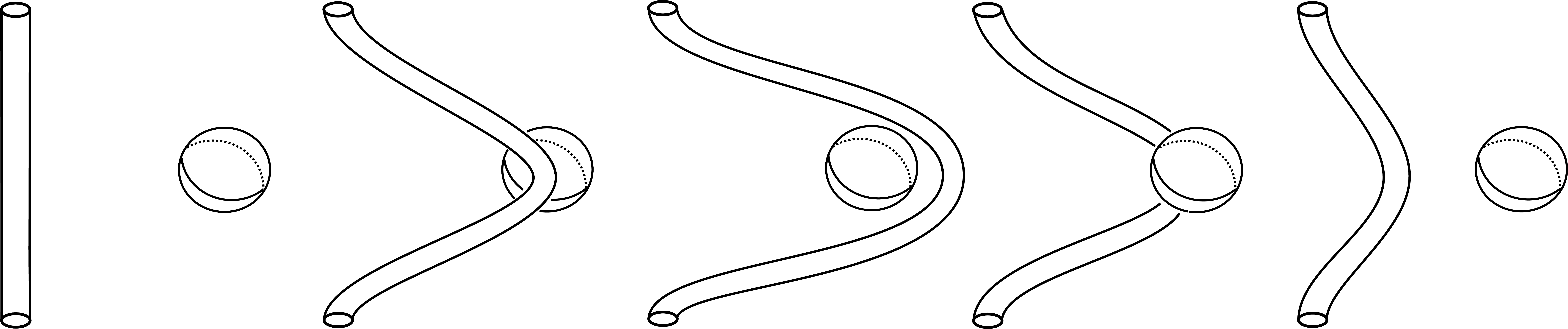}
    \caption{The twirling family.}
    \label{F:twirl}
\end{figure}
\item
    Let $b_\bull\colon\sG\to\sG$ be the path of diffeomorphisms of $\sG$ given by the ambient isotopy extension of $\nu \alpha^1_\bull$, starting with $b_0=\id$. Each $b_t$ is the identity near boundary of $\sB$. 
\item 
    For $t=1$ we have $b_1(\nu \alpha^1)=\nu \alpha^1$, so the restriction of $b_1$ to $\sB\subset\sG$ is a diffeomorphism $b\colon \sB\to \sB$ rel boundary.
    We call it the \emph{barbell map} $b\in\Diffp(\sB)$.
\qedhere
\end{itemize}
\end{defn}
The twirling $\alpha^1_t$ of the arc $\alpha^1$ around $L$ can be viewed as a loop $\alpha^1_t\sqcup \alpha^2\colon D^1\sqcup D^1\into D^4$ of string links, where $\alpha^1$ twirls around the meridian sphere $S_2$ of $\alpha^2$. The following is then not hard to see~\cite{BG,GH,K-graspers-mcg} and will be used in Section~\ref{subsec:identify}.
\begin{lem}\label{L:twirl}
    The loop $\alpha^1_t\sqcup\alpha^2$ is isotopic to the loop $\alpha^1\sqcup \alpha^2_t\colon D^1\sqcup D^1\into D^4$ of string links, where $\alpha^2_0=\alpha^2_1=\alpha^2$ twirls around the oppositely oriented meridian $-S_1$ of $\alpha^1$.
\end{lem}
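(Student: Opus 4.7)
The strategy is to realize both loops of string links through an ambient rotation supported in a neighborhood of the bar of $\sB$, and then to slide the support of this rotation from one cuff to the other along the bar.

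First, I would reformulate the twirling family $\alpha^1_\bull$ as follows. The drag-out and drag-back portions are inverse isotopies in $\sG$, so after a reparametrization argument the loop $\alpha^1_\bull \sqcup \alpha^2$ is homotopic in $\Embp(D^1\sqcup D^1, D^4)$ to a loop supported entirely in a collar of $L=S_2$, which performs one full rotation of a copy of $\alpha^1$ through the meridional foliation of $\nu L$. This local rotation extends to a compactly supported ambient isotopy $\psi_t$ of $D^4$ rel boundary, with $\supp(\psi_t)\subset\nu(\text{bar})\cup\nu S_2$, and $\alpha^1_t \sqcup \alpha^2 = \psi_t\circ(\alpha^1\sqcup\alpha^2)$ up to homotopy of loops.

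Second, I would construct a two-parameter family $\psi_t^s$ for $s\in[0,1]$ of such ambient rotation loops, obtained by smoothly translating the support of $\psi_t$ along the bar from a collar of $S_2$ at $s=0$ to a collar of $S_1$ at $s=1$. For each $s$ the map $t\mapsto\psi_t^s\circ(\alpha^1\sqcup\alpha^2)$ is a valid loop of string links because the translation occurs in a region of the bar disjoint from both arcs, so the two arcs never collide. At $s=1$ the loop fixes $\alpha^1$ pointwise and twirls $\alpha^2$ around a meridional foliation of a collar of $S_1$. This produces the desired free (hence based, as $\pi_1$ is abelian) homotopy between the two loops.

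The main obstacle is the sign: one must verify that the orientation of the rotation, when transported along the bar from $\nu S_2$ to $\nu S_1$, induces the \emph{opposite} orientation on the meridional foliation at $S_1$ relative to the standard one. This reversal comes from the fact that the outward normals of $S_1$ and $S_2$ (pointing into the bar from the respective cuffs) orient the two meridional foliations oppositely under the bar's canonical parametrization. This is the familiar antisymmetry of the linking pairing on $\pi_1$ of two-component string links, and a direct local calculation in coordinates on $\nu(\text{bar})\cong D^3\tm I$ makes it explicit; alternatively, one may quote the equivalent statements appearing in the analyses of graspers in~\cite{BG,GH,K-graspers-mcg}.
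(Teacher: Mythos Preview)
The paper does not supply its own proof of this lemma; it merely asserts that the statement is ``not hard to see'' and cites \cite{BG,GH,K-graspers-mcg}. So there is nothing in the paper to compare against directly, and your plan must be judged on its own merits.

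Your high-level idea---realize the twirl by an ambient rotation and interpolate along the bar---is the right one, and your sign discussion in step~3 is appropriate. However, step~2 as written has a genuine gap. You propose to translate the support of $\psi_t$ from a collar of $S_2$ to a collar of $S_1$, and then claim that at $s=1$ the resulting loop ``twirls $\alpha^2$ around a meridional foliation of a collar of $S_1$.'' But a collar of $S_1$ does not contain $\alpha^2$ (which sits at the center of $Q_2$, far from $S_1$), so $\psi_t^1$ as you have described it fixes $\alpha^2$ pointwise and gives the \emph{constant} loop. Worse, for intermediate $s$ your support lies in the bar region, which you yourself note is disjoint from both arcs; hence $\psi_t^s\circ(\alpha^1\sqcup\alpha^2)$ is constant for those $s$ as well. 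Your homotopy therefore shows the twirling loop is null-homotopic, not that it agrees with the other twirl---and since the twirl is in fact nontrivial in $\pi_1\Embp(D^1\sqcup D^1,D^4)$, this cannot be correct.

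The repair is that you cannot merely slide the \emph{support} of the rotation; you must slide its \emph{axis} while keeping both arcs inside the rotating region. Concretely, take the ambient rotation to be a $2\pi$-rotation of a $4$-ball $B\subset D^4$ about a $2$-disk axis $A\subset B$, arranged so that one arc lies on $A$ (hence fixed) and the other lies in $B\setminus A$ (hence twirled). As $s$ varies from $0$ to $1$, slide $A$ from containing $\alpha^2$ to containing $\alpha^1$, dragging the off-axis arc along accordingly so that it stays in $B\setminus A$ throughout. This is the $4$-dimensional analogue of the standard configuration-space argument in the plane, where rotating about a moving center interpolates between ``$p_1$ circles $p_2$'' and ``$p_2$ circles $p_1$'' without ever passing through the constant loop. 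With this correction your outline goes through, and the orientation reversal you identify in step~3 accounts for the minus sign on $S_1$.
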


In Section~\ref{subsec:BB-to-S2} we will make use of the 3-ball $b(\Delta)$, the image of the midball $\Delta\subset\sB$ under the barbell map $b$. The bottom row of Figure~\ref{F:midball} depicts the cross-sections $b(\Delta_s)$ for several moments $s\in D^1$. From the second moment to the fourth use the twirling isotopy of the thin orange tube around $S_2$. From the second moment to the first, and from the fourth to the last, use the following isotopy: take the small circle where the tube starts on the vertical disk, and drag it within the orange disk until it reaches the westernmost point on the outer orange blob, and then cancel the two blobs against each other. 

\begin{figure}[!htbp]
    \centering
    \includegraphics[width=\linewidth]{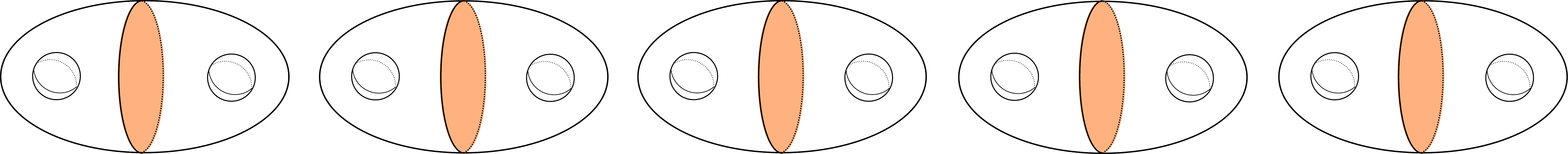}
    \vspace{1pt}
    
    \includegraphics[width=\linewidth]{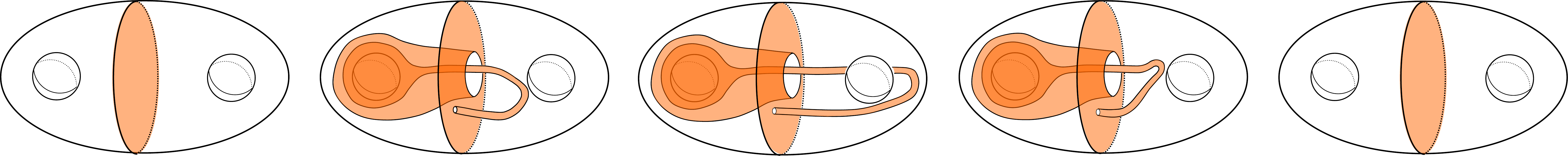}
    \caption{The top row depicts the cross-sections $\Delta_s$ of the midball $\Delta\subset\sB$, and the bottom row of its image $b(\Delta)\subset\sB$ under the barbell map.
    }
    \label{F:midball}
\end{figure}
To see that this is a correct description of $b(\Delta)$, let us look at the top row of Figure~\ref{F:midball} depicting the slices $\Delta_s$. The twirling isotopy $\alpha^1_\bull$ of the arc $\alpha^1$ is visible as the movement of the ball $Q_1$ around the sphere $S_2$. To reach a neighborhood of $S_2$ the ball first has to cross the orange disk $\Delta_s$ -- this is what creates the inner blob on that disk. As the ball goes around $S_2$ this blob gets stretched, forming a tube, and as the ball returns it crosses the orange disk again, creating the outer blob. For this description of $b(\Delta)$ see~\cite[Prop.6.1,Fig.36]{BG}; the only difference is that we have ``isotoped the neck (tube) out''.

\subsection{Barbell diffeomorphisms and grasper families}\label{subsec:barbells-graspers}

\begin{defn}\label{D: barbell-diffeo}
Fix a manifold $M$.
\begin{itemize}
    \item
    A \emph{barbell} is a smooth embedding $\mB\colon\sB\into M$ of the model barbell away from $\partial M$.
    \item
    Given a barbell $\mB\colon\sB \into M$, the \emph{barbell diffeomorphism} $\phi^\mB\colon M\to M$ is defined as 
    \[
    \phi^\mB (x) = 
    \begin{cases}
        x, & x \in M\sm \mB(\sB),\\
        \mB\circ b \circ \mB^{-1}(x), & x\in\mB(\sB),
    \end{cases}
    \]
    using the barbell map $b\in\Diffp(\sB)$ from Definition~\ref{D:twirl}.
\qedhere
\end{itemize}
\end{defn}

\begin{lem}\label{L:b-nullistpy}
    If $\mB\colon\sB\into M$ extends to a map $\mG\colon\sG\into M$ so that $\mB=\mG\circ fill_1$, then $\phi^\mB$ is isotopic to the identity via the ambient isotopy
    \[
    \phi^\mB_t (x) = 
    \begin{cases}
        x, & x \in M\sm \mG(\sG),\\
        \mG\circ b_t \circ \mG^{-1}(x), & x\in\mG(\sG).
    \end{cases}
    \]
\end{lem}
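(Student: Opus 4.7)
The plan is to verify directly that the piecewise formula defines a smooth path in $\Diffp(M)$ from the identity at $t=0$ to $\phi^\mB$ at $t=1$. First I would address well-definedness and smoothness. The diffeomorphisms $b_t\in\Diffp(\sG)$ come from the ambient isotopy extension theorem applied to the family $\nu\alpha^1_\bull$, which is compactly supported in the interior of $\sG$; consequently $b_t$ may be taken to be the identity on a collar of $\partial\sG$. Transporting through the embedding $\mG$, the map $\mG\circ b_t\circ\mG^{-1}$ is then the identity on a collar of $\mG(\partial\sG)\cap\operatorname{int}(M)$ inside $\mG(\sG)$, so it glues smoothly with the identity on $M\sm\mG(\sG)$. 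The resulting $\phi^\mB_t$ is a smooth one-parameter family of self-diffeomorphisms of $M$, supported inside $\mG(\sG)$.

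Next I would check the endpoints. For $t=0$, the identity $\phi^\mB_0=\id_M$ is immediate from $b_0=\id_\sG$. For $t=1$, I would partition $M$ into the three regions $M\sm\mG(\sG)$, $\mG(Q_1\tm D^1)$, and $\mB(\sB)=\mG(\sB)$, and verify $\phi^\mB_1=\phi^\mB$ on each. On the first region both maps are the identity by definition. On $\mB(\sB)$, the factorization $\mB=\mG\circ fill_1$ gives $\mG^{-1}(x)\in\sB$ for $x\in\mB(\sB)$, and since $b_1|_\sB$ is by definition the barbell map $b$, one obtains $\mG\circ b_1\circ\mG^{-1}=\mB\circ b\circ\mB^{-1}=\phi^\mB$ on this region. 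On the middle region $\phi^\mB$ is the identity (since $\mG(Q_1\tm D^1)$ is disjoint from $\mB(\sB)$), so the task reduces to showing $b_1|_{Q_1\tm D^1}=\id$. This follows from the defining property $b_t\circ\nu\alpha^1=\nu\alpha^1_t$ of the ambient extension: at $t=1$ one has $\nu\alpha^1_1=\nu\alpha^1$, which forces $b_1$ to fix the image $\nu\alpha^1(\nu D^1)=Q_1\tm D^1$ pointwise.

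The only point requiring real care, and the expected main obstacle, is the last step: it uses that $\nu\alpha^1_\bull$ is a genuine loop in the space of \emph{parametrized framed} embeddings $\nu D^1\into\sG$, not merely a loop of underlying arcs rel boundary. In other words, the thickening of the arc twirling must be chosen so that the framing closes up after the meridional excursion around $L$. Once this is built into the definition of the twirling family, the identity $b_1|_{Q_1\tm D^1}=\id$ is automatic, and the remainder of the argument is a routine bookkeeping exercise with the ambient isotopy extension theorem.
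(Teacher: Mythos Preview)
Your proposal is correct and follows essentially the same approach as the paper, which simply asserts that $\phi^\mB_t$ is well defined with $\phi^\mB_0=\Id$ and $\phi^\mB_1=\phi^\mB$ ``by definition.'' Your version is a careful expansion of that one-line proof; in particular, your observation that the equality $\phi^\mB_1=\phi^\mB$ on $\mG(Q_1\times D^1)$ relies on the \emph{framed} twirling family closing up (so that $b_1|_{Q_1\times D^1}=\id$) makes explicit a point the paper leaves implicit.
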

\begin{proof}
    We clearly have that $\phi^\mB_t$ is well defined, and $\phi^\mB_0=\Id$ and $\phi^\mB_1=\phi^\mB$ by definition. 
\end{proof}

\begin{rem}
    In \cite{BG} the space $\sB$ is called the barbell, $\mB$ is a barbell implantation, and twirling is spinning. In \cite{KT-4dLBT,K-Dax,K-graspers-mcg} twirling is described as swinging. We opted for this new distinctive name.
\end{rem}

Let us now return to the setting of Sections~\ref{sec:intro} and~\ref{sec:diagram}: we have a framed Hopf link $S\sqcup c\colon S^2\sqcup S^1\into S^4$, whose complement we identify with $S^1\tm S^2\tm I$. We also fix a barbell $\mB_k\colon \sB\into S^4\sm\nu(S\sqcup c)\cong S^1\tm S^2\tm I$ as in Figure~\ref{F-intro}(i).
\begin{defn}\label{D:graspers-on-c}
\hfill 
\begin{itemize}
\item 
    Define the grasper $c\mG_k\colon\sG\into S^4\sm\nu S\cong S^1\tm D^3$ as in Figure \ref{F-intro}(ii), by filling in the cuff $S_2$ of the barbell $\mB_k$. Up to a re-parametrization we can assume that $J\subset S^1$ is an interval such that $\nu c\cap c\mG_k=\nu c(\nu J) = c\mG_k\circ\nu\alpha^1(\nu D^1)$.
\item 
    The \emph{$c\mG_k$-grasper family} is the loop of framed circles in $S^1\tm D^3$ given for $t\in[0,1]$ by
    \[
    \lambda^{c\mG_k}_t\colon\nu S^1\into S^1\tm D^3,\quad \lambda^{c\mG_k}_t(z) \coloneqq\begin{cases}
        \nu c(z),                         & z\in \nu(S^1\sm J),\\
        c\mG_k\circ \nu \alpha^1_t\circ (\nu\alpha^1)^{-1}( c\mG_k^{-1}\nu c(z)),    & z\in \nu J.
    \end{cases}
    \]
    Since $\nu\alpha^1_0=\nu\alpha^1_1=\nu c|_{\nu J}$ this defines a class $\lambda^{c\mG_k}_\bull\in\pi_1(\Emb(\nu S^1, S^1\tm D^3);\nu c)$.
\item 
    Define the grasper $S\mG_k$ on $S:S^2\into S^2\tm D^2$ as in Figure \ref{F-intro}(iii), by filling in the cuff $S_1$ of the barbell $\mB_k$. Up to a re-parametrization we can assume that $J'\subset S^2$ is an annulus such that $\nu S\cap S\mG_k=\nu S(\nu J')= S\mG_k\circ\nu\alpha^2(\nu J'')$ where $J''=D^1\tm S^1\subset D^1\tm D^3=\nu D^1$.
\item 
    The \emph{$S\mG_k$-grasper family} is the loop of framed spheres in $S^2\tm D^2$ given by
    \[
    \lambda^{S\mG_k}_t\colon\nu S^2\into S^2\tm D^2,\quad
    \lambda^{S\mG_k}_t(z)\coloneqq\begin{cases}
        \nu S(z),          & z\in \nu(S^2\sm J'),\\
        S\mG_k\circ \nu\alpha^2_t\circ (\nu\alpha^2)^{-1} (S\mG_k^{-1}\nu S(z)),    & z\in \nu J'.
    \end{cases}
    \]
    Since $\nu\alpha^2_0|_{\nu J''}=\nu\alpha^2_1|_{\nu J''}=\nu S|_{\nu J'}$ this is a class $\lambda^{S\mG_k}_\bull\in\pi_1(\Emb(\nu S^2, S^2\tm D^2);\nu S)$.
\qedhere
\end{itemize}
\end{defn}

We can now describe the isomorphism mentioned in~\eqref{eq-intro:pi1S1D3}.
\begin{thm}[\cites{BG,K-Dax,K-Dax2}]\label{T:pi1-circles}
    There is an isomorphism
    \[
        \pi_1(\Emb(\nu S^1, S^1\tm D^3),\nu c)\cong \Z^{\infty}\tm \Z/2 \tm \Z,
    \]
    where the generator of $\Z$ is the rotation of the circle in place, of $\Z/2$ is the rotation of the normal frames at all points simultaneously, and of $\Z^{\infty}$ are the classes $\lambda^{c\mG_k}_\bull$ for $k\geq1$.
\end{thm}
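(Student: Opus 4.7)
The plan is to follow the Dax-style metastable framework of the fourth author and Teichner~\cite{K-Dax, K-Dax2}; an alternative is the embedding-calculus approach of Budney--Gabai~\cite{BG}. Both strategies exploit that, since $c$ has codimension three in $S^1\tm D^3$, the derivative map
\[
\Emb(\nu S^1, S^1\tm D^3) \longrightarrow \operatorname{Imm}(\nu S^1, S^1\tm D^3)
\]
sits well within the metastable range, so its homotopy fiber admits an explicit description in terms of weighted double-point configurations. A long exact sequence argument then reduces the computation of $\pi_1\Emb$ to Smale--Hirsch data plus a bordism-style double-point count.

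The first step is to compute $\pi_1\operatorname{Imm}$ via Smale--Hirsch. Since $T(S^1 \tm D^3)$ is parallelizable of rank four, the space of framed immersions is homotopy equivalent to $\operatorname{Map}(S^1, S^1 \tm O(4))$, whence
\[
\pi_1\operatorname{Imm}(\nu S^1, S^1\tm D^3) \cong \pi_1(S^1) \oplus \pi_1 O(4) \cong \Z \oplus \Z/2.
\]
The factors are generated by the rotation of $c$ along $S^1$ and the rotation of the normal frame, and both visibly lift to $\Emb$, providing a splitting $\Z \oplus \Z/2 \hookrightarrow \pi_1\Emb$.

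The second step is to identify the kernel of $\pi_1\Emb \to \pi_1\operatorname{Imm}$ via the Dax invariant. The relevant group of $\pi_1$-level double-point configurations for loops of knots in a $4$-manifold $W$ is $\Z[\pi_1(W) \setminus 1]$ modulo the involution that swaps the two sheets at a self-intersection. For $W = S^1 \tm D^3$ one has $\pi_1(W) = \Z$ and the involution acts as $n \mapsto -n$, yielding $\bigoplus_{k \geq 1}\Z$ indexed by positive integers. By~\cite{K-Dax, K-Dax2}, the Dax homomorphism identifies $\ker\bigl(\pi_1\Emb \to \pi_1\operatorname{Imm}\bigr)$ with this group, and combined with the splitting of step one this gives the decomposition $\pi_1\Emb \cong \Z^\infty \tm \Z/2 \tm \Z$.

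The main obstacle I expect is the geometric identification of the Dax invariant of $\lambda^{c\mG_k}_\bull$ with the $k$-th standard generator of $\bigoplus_{k \geq 1}\Z$. One views the loop as a smooth map $S^1 \tm S^1 \to S^1 \tm D^3$, perturbs to self-transversality, and checks that the twirling around the leaf $L$ in Definition~\ref{D:twirl} produces a single pair of transverse double points whose connecting arc pushes forward to represent $\pm k$ times the generator of $\pi_1(S^1 \tm D^3)$. Since the calculation is local in the grasper, it reduces to a model computation in a Hopf-linked pair in $D^4$. Once this is established, the classes $\lambda^{c\mG_k}_\bull$ for $k \geq 1$ are detected independently and generate the Dax kernel, completing the proof.
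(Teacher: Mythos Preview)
Your proposal is sound in outline but takes a different route from the paper, and leaves the key step at the level of a sketch.

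The paper's proof is deliberately short: it treats the isomorphism $\pi_1\Emb(\nu S^1,S^1\tm D^3)\cong \Z^\infty\tm\Z/2\tm\Z$ and the identification of the $\Z^\infty$ generators as \emph{self-referential} graspers (Figure~\ref{F:self-ref-grasper}) as a black box, citing \cite{BG,K-Dax,K-Dax2}. The only new content is a geometric isotopy (Figure~\ref{F:leaf-slide}) that slides the leaf of $c\mG_k$ from being a meridian of $S$ to being a meridian of the bar, thereby matching $c\mG_k$ with the self-referential grasper already known to be the $k$-th generator. No Dax computation is redone.

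Your steps one and two recapitulate the content of those references rather than invoking them; this is fine, though the Smale--Hirsch identification you write down for framed immersions is a bit loose and would need care with components and basepoints. The genuine divergence is your step three: instead of isotoping $c\mG_k$ to the known generators, you propose to compute its Dax invariant directly by exhibiting a single pair of double points with connecting arc representing $x^{\pm k}$. This is plausible, but note that in $c\mG_k$ the leaf $S_1$ is a meridian of $S$, not of $c$ or of the bar, so the local model computation you allude to is not quite the one carried out in \cite{K-Dax}; you would either need to redo it for this leaf position or, more simply, observe the same leaf-slide isotopy the paper uses. Either way the argument closes, but the paper's one-picture isotopy is the more economical way to finish, and your proposal does not yet make explicit why the double-point group element comes out to $x^k$ for this particular leaf.
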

\begin{proof}
    For the factors $\Z/2$ and $\Z$ see \cite{K-Dax2}.
    In \cite{BG} and \cite{K-Dax} it was shown that certain ``self-referential'' graspers, depicted in Figure~\ref{F:self-ref-grasper}, generate the $\Z^{\infty}$ summand. In Figure~\ref{F:leaf-slide} we show an isotopy from our graspers $c\mG_k$ to those self-referential graspers, finishing the proof. Namely, the leaf $c\mG_k(L)$ can be moved from being meridian to the tube (a subset of $S$), to being meridian to the bar.
\end{proof}
\begin{figure}[!htbp]
    \labellist                       
    \tiny\hair 2pt
    \pinlabel \blue{$k$} at 43 50 
    \endlabellist
    \centering
\includegraphics[width=.23\linewidth]{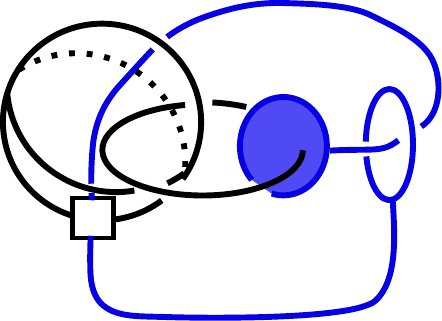}
    \caption{The self-referential grasper on $c\colon S^1\into S^1\tm D^3$ for $k\geq1$.}
    \label{F:self-ref-grasper}
\end{figure}
\begin{figure}[!htbp]
    \labellist                             
    \tiny\hair 2pt
    \pinlabel \blue{$k$} at 39 63
    \pinlabel \blue{$k$} at 393 63 
    \pinlabel \blue{$k$} at 748 63 
    \endlabellist
    \centering
\includegraphics[width=\linewidth]{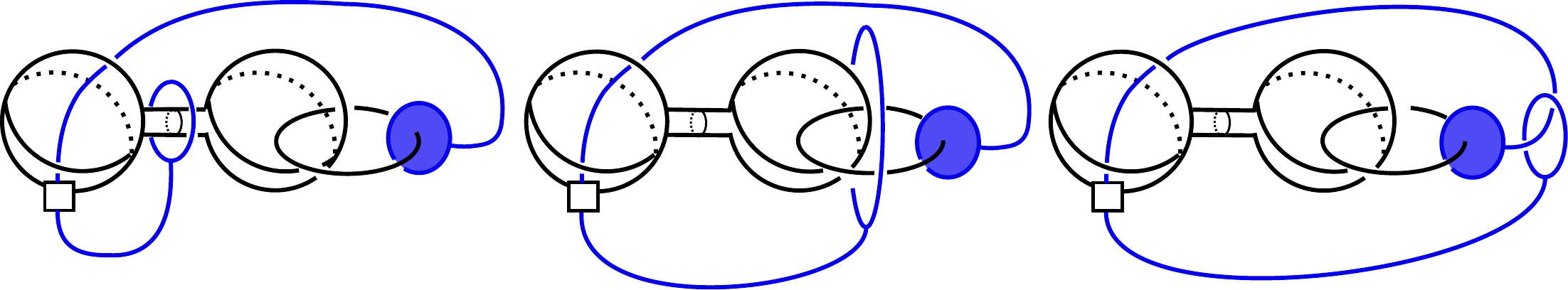}
    \caption{An isotopy of the sphere $L$ in $S^4\sm\nu (S\sqcup c)$.}
    \label{F:leaf-slide}
\end{figure}

\subsection{Completing the proof of Theorem B}\label{subsec:identify}

Recall that in Section~\ref{sec:diagram} we have proven Theorem~\ref{mainT:diagram} using the fibrations $\res_{\nu c}$ and $\res_{\nu S}$. The main diagram was in Theorem~\ref{T:main-diagram}, with all maps explicitly named; in particular, we have the connecting maps $\delta_{\nu c}^{S^1\tm D^3}$ and $\delta_{\nu S}^{S^2\tm D^2}$ of these fibrations. The following lemma finishes the proof of Theorem~\ref{mainT:infinite-subgroups}.
\begin{lem}
    The barbell diffeomorphism $\phi^{\mB_k}\in\pi_0\Diffp(S^1\tm S^1\tm I)$ agrees with the image of the loop $\lambda^{c\mG_k}_\bull$ of framed circles in $S^1\tm D^3$ under the map $\delta_{\nu c}^{S^1\tm D^3}$, and with the image of the loop $\lambda^{S\mG_k}_\bull$ of framed spheres in $S^2\tm D^2$ under the map $\delta_{\nu S}^{S^2\tm D^2}$. That is,
    \[
    \phi^{\mB_k}=\delta_{\nu c}^{S^1\tm D^3}(\lambda^{c\mG_k}_\bull)=\delta_{\nu S}^{S^2\tm D^2}(\lambda^{S\mG_k}_\bull).
    \]
\end{lem}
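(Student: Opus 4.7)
The argument is essentially an unwinding of definitions: in both cases, Lemma~\ref{L:b-nullistpy} supplies a canonical path in the total space of the relevant Cerf--Palais fibration from $\id$ to $\phi^{\mB_k}$, and we check that this path projects down to the grasper family. The key symmetry is Lemma~\ref{L:twirl}, which is what enables the same diffeomorphism $\phi^{\mB_k}$ to serve as the common endpoint for both fibrations.

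First I recall the recipe for the connecting homomorphism $\delta_{\nu c}^{S^1\tm D^3}$. Given a loop $\mu_t$ of framed circles at $\nu c$, apply ambient isotopy extension (Cerf--Palais) to obtain a path $\Phi_t\in\Diffp(S^1\tm D^3)$ with $\Phi_0=\id$ and $\Phi_t\circ\nu c=\mu_t$; then $\delta_{\nu c}^{S^1\tm D^3}([\mu_\bull])$ is represented by $\Phi_1$, which fixes $\nu c$ pointwise and so restricts to a class in $\pi_0\Diffp(S^1\tm S^2\tm I)$. An analogous description applies to $\delta_{\nu S}^{S^2\tm D^2}$, using $\nu S$-fibered ambient extension in $S^2\tm D^2$.

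For the $c$-grasper equality, I would apply Lemma~\ref{L:b-nullistpy} with $M=S^1\tm D^3$, $\mB=\mB_k$, and $\mG=c\mG_k$. This produces an explicit isotopy $\phi^{\mB_k}_t\in\Diffp(S^1\tm D^3)$ from $\id$ to $\phi^{\mB_k}$. The only thing to verify is that $\phi^{\mB_k}_t\circ\nu c=\lambda^{c\mG_k}_t$ for every $t$. Outside $c\mG_k(\sG)$ both sides equal $\nu c(z)$; inside, $\phi^{\mB_k}_t=c\mG_k\circ b_t\circ c\mG_k^{-1}$ and since $b_t$ is by construction the ambient extension of $\nu\alpha^1_\bull$, one has $b_t\circ\nu\alpha^1=\nu\alpha^1_t$. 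Substituting this into the formula for $\phi^{\mB_k}_t\circ\nu c$ on $\nu J$ reproduces precisely the formula defining $\lambda^{c\mG_k}_t$ in Definition~\ref{D:graspers-on-c}. Hence $\phi^{\mB_k}_t$ is a lift of $\lambda^{c\mG_k}_\bull$ and the first equality follows.

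For the $S$-grasper equality the strategy is identical, but with a twist: the grasper $S\mG_k$ fills the \emph{opposite} cuff of $\mB_k$, so to apply Lemma~\ref{L:b-nullistpy} in $M=S^2\tm D^2$ with $\mG=S\mG_k$ we need to realize the barbell map $b$ as the ambient extension of the twirling of $\alpha^2$ (rather than $\alpha^1$). This is exactly what Lemma~\ref{L:twirl} provides: the loop of string links $\alpha^1_t\sqcup\alpha^2$ is isotopic to the loop $\alpha^1\sqcup\alpha^2_t$ where $\alpha^2$ twirls around $-S_1$. Carrying this isotopy through the ambient extension, the same diffeomorphism $b$ of $\sB$ is realized as the endpoint of the twirling of $\alpha^2$ inside $\sG$ (now filling $Q_2$ via $fill_2$). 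The computation then proceeds exactly as before, with $\nu c,\nu\alpha^1,c\mG_k,J$ replaced by $\nu S,\nu\alpha^2,S\mG_k,J'$, yielding $\phi^{\mB_k}_t\circ\nu S=\lambda^{S\mG_k}_t$ and hence the second equality. The main obstacle is bookkeeping the role switch of the two cuffs; Lemma~\ref{L:twirl} is precisely designed to absorb this, so once it is invoked the verification is parallel to the $c$-grasper case.
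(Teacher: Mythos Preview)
Your proof is correct and follows essentially the same approach as the paper's: both unwind the definition of the connecting map, invoke Lemma~\ref{L:b-nullistpy} to produce the null-isotopy $\phi^{\mB_k}_\bull$ as an explicit lift of the grasper family, and appeal to Lemma~\ref{L:twirl} to handle the cuff swap in the $S$-case. Your write-up is in fact more detailed than the paper's, which simply asserts that $\phi^{\mB_k}_\bull$ ``precisely extends $\lambda^{c\mG_k}_\bull$'' and that the $S$-case ``is analogous, and uses Lemma~\ref{L:twirl}.''
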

\begin{proof}
    By definition each of these connecting maps lifts a loop of embeddings into an ambient isotopy of the manifold in question, and then restricts to the complement of the basepoint embedding.
    
    Now simply observe that the ambient isotopy $\phi^{\mB_k}_\bull$ of $M=S^1\tm D^3$ from Lemma~\ref{L:b-nullistpy} precisely extends $\lambda^{c\mG_k}_\bull$. Indeed, on the image of $c\mG_k$ this is given as the ambient isotopy $b_\bull$ of the twirling family $\nu \alpha^1_\bull$, and elsewhere as the identity.
    The diffeomorphism $\delta_{\nu c}^{S^1\tm D^3}(\lambda^{c\mG_k}_\bull)$ is by definition the restriction of $\phi^{\mB_k}_1$ to $S^1\tm D^3\sm\nu c$, and this is exactly $\phi^{\mB_k}$. 
    
    The argument for $\delta_{\nu S}^{S^2\tm D^2}(\lambda^{S\mG_k}_\bull)=\phi^{\mB_k}$ is analogous, and uses Lemma~\ref{L:twirl}.
\end{proof}

\begin{rem}\label{rem:self-ref-barbell}
    The isotopy of the grasper $c\mG_k$ from Figure~\ref{F:leaf-slide} gives an isotopy of $\mB_k$ as well: just erase the ball $Q_2$ from all the pictures. This gives an alternative ``self-referential'' description of this barbell that will be used later (see for example Figure~\ref{F:bb after surgery}).
\end{rem}

\begin{cor}
\begin{itemize}
\item 
    The abelian group $\pi_0\Diffp(S^1\tm S^2\tm I)$ has an infinitely generated free abelian summand $\Z^\infty<\pi_0\Diffp(S^1\tm S^2\tm I)$. Its generators are the barbell diffeomorphisms $\phi^{\mB_k}$ for the barbells $\mB_k$ from Figure~\ref{F-intro}(i).

\item
    The abelian group $\pi_1(\Embp(\nu S^2,S^2\tm D^2),\nu S)$ has an infinitely generated free abelian summand $\Z^\infty<\pi_1(\Embp(\nu S^2,S^2\tm D^2),\nu S)$.
    Its generators are the $S\mG_k$-grasper families  $\lambda^{S\mG_k}_\bull$ depicted in Figure~\ref{F-intro}(ii).
\end{itemize}
\end{cor}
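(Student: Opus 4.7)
The plan is to combine Theorem~\ref{T:pi1-circles} with Theorem~\ref{T:main-diagram} and the lemma just established, reading off the images of the grasper generators through the injective maps in the main diagram. Since the diagram in Theorem~\ref{T:main-diagram} consists of trivial (i.e.\ split) extensions of abelian groups, any free abelian summand of one entry maps isomorphically onto a free abelian summand of a larger entry along any of the listed injections, so the whole task reduces to tracking the $\Z^\infty$ summand from Theorem~\ref{T:pi1-circles}.

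First I would start with the isomorphism
\[
\pi_1(\Emb(\nu S^1, S^1\tm D^3);\nu c)\cong \Z^\infty \tm \Z/2 \tm \Z
\]
of Theorem~\ref{T:pi1-circles}, where the $\Z^\infty$ factor is freely generated by the classes $\lambda^{c\mG_k}_\bull$ for $k\geq 1$. The map $\delta_{\nu c}^{S^1\tm D^3}$ is injective by Theorem~\ref{T:main-diagram} (in fact the middle column is a trivial extension), so its image contains a free abelian summand $\Z^\infty$ of $\pi_0\Diffp(S^1\tm S^2\tm I)$, generated by $\delta_{\nu c}^{S^1\tm D^3}(\lambda^{c\mG_k}_\bull)$. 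By the previous lemma, these images are precisely the barbell diffeomorphisms $\phi^{\mB_k}$, establishing the first bullet.

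For the second bullet, I would apply the same reasoning to the middle row of Theorem~\ref{T:main-diagram}: the map $\delta_{\nu S}^{S^2\tm D^2}$ is an injection whose image splits off as a direct summand, so the preimage in $\pi_1\Embp(\nu S^2, S^2\tm D^2)$ of the $\Z^\infty$ summand $\langle \phi^{\mB_k}\rangle < \pi_0\Diffp(S^1\tm S^2\tm I)$ is itself a free abelian summand $\Z^\infty$. By the previous lemma again, the classes $\lambda^{S\mG_k}_\bull$ are sent to $\phi^{\mB_k}$ under $\delta_{\nu S}^{S^2\tm D^2}$, so by injectivity they freely generate this $\Z^\infty$ summand of $\pi_1\Embp(\nu S^2, S^2\tm D^2)$.

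There is essentially no obstacle here beyond bookkeeping; the substance has already been done in Theorem~\ref{T:main-diagram} (the splitting of the extensions), in Theorem~\ref{T:pi1-circles} (the presence of the $\Z^\infty$ in the codimension-three embedding space), and in the immediately preceding lemma (the identification of the generators). The only mild subtlety is to note that a \emph{summand} of an injective image in a split extension remains a summand of the total group, which follows by composing sections; this makes the two $\Z^\infty$ factors genuine direct summands and not merely free abelian subgroups.
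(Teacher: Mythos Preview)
Your proposal is correct and matches the paper's approach: the Corollary is stated there without proof, as it follows immediately from Theorem~\ref{T:pi1-circles}, the split extensions of Theorem~\ref{T:main-diagram}, and the preceding lemma identifying $\phi^{\mB_k}=\delta_{\nu c}^{S^1\tm D^3}(\lambda^{c\mG_k}_\bull)=\delta_{\nu S}^{S^2\tm D^2}(\lambda^{S\mG_k}_\bull)$. Your final paragraph correctly handles the one genuine subtlety, that the $\Z^\infty$ is a \emph{summand} and not merely a subgroup; for the second bullet you could alternatively (and slightly more directly) use the split left column of Theorem~\ref{mainT:diagram}, which exhibits $\pi_1\Emb(\nu S^1,S^1\tm D^3)$ itself as a summand of $\pi_1\Emb(\nu S^2,S^2\tm D^2)$.
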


\section{Pseudo-isotopy obstructions}\label{sec:PI}

In this section our goal is to show that the nontriviality of the barbell diffeomorphisms $\phi^{\mB_k}$ of $S^1\tm S^2\tm I$ is also detected by Hatcher and Wagoner's second obstruction $\Theta$ to the ``pseudo-isotopy implies isotopy'' problem \cite{HW}.  Throughout this section we denote
\[
    X\coloneqq S^1\tm S^2\tm I
\]
The existence of an infinitely generated subgroup of $\pi_0 \Diffp(X)$ that is detected by $\Theta$ was first shown by Singh \cite{Singh}. He starts by building for a general $4$-manifold appropriate 1-parameter families of 2-spheres whose $\Theta$ realizes certain nontrivial values, thus generating pseudo-isotopies that are not isotopic to isotopies. He then shows that for $X$ these values survive in the suitable quotient (by the ``inertia subgroup''), and thus obstruct the induced diffeomorphism of $X$ from being isotopic to the identity. 

In contrast to Singh, we begin with an explicit diffeomorphism $\phi^{\mB_k}$ for a fixed $k\geq1$ and in Section~\ref{subsec:BB-to-S2} we construct a loop $R_\bull$ of $2$-spheres in $X \# S^2\tm S^2$, that gives rise to a pseudo-isotopy for $\phi^{\mB_k}$. In fact, in Section~\ref{subsec:BB-to-double-BB} we first describe a diffeomorphism $\wt{\phi}$ of $X \# S^2\tm S^2$ that is, on one hand, isotopic to $\phi^{\mB_k}$ after surgery on $S^2\tm \{p\}$, and on the other hand easily gives rise to a suitable family $R_\bull$. In Section~\ref{subsec:HWS} we recall the definition of $\Theta$, and in Section~\ref{subsec:compute} we compute it for the family $R_\bull$.

\subsection{From barbells to barbell pairs}\label{subsec:BB-to-double-BB}
Our first step is to define a pair of barbells $\wt{\mB}_k^{\pm}$ in $X \# S^2\tm D^2$, shown in Figure~\ref{F:double bb} and explained as follows. 

Recall that in Figure~\ref{F-intro}(i) the barbell $\mB_k$ is drawn in blue in $S^4\sm\nu(S\sqcup c)$, which we identify with $X$. In Figure~\ref{F:double bb} apart from this black Hopf link $S\sqcup c$ we have another Hopf link, consisting of the red sphere $R$ and the light green circle $\gamma$. We can identify
\[
    X \# S^2\tm D^2 \;\cong\; X\sm \nu \gamma \;=\; S^4\sm\nu(S\sqcup c\sqcup \gamma),
\]
so that $R$ is sent to $S^2\tm\{0\}$ and the obvious green disk $D_G$ bounded by $\gamma$ is identified with $\{p\}\tm D^2$. We will also use the inclusion
\[
    X \# S^2\tm D^2 \;\subset\; X \# S^2\tm S^2
\]
that adds to $X\sm \nu \gamma$ a copy of $S^2\tm D^2$ not drawn in the picture.
\begin{figure}[!htbp]
    \labellist                             
    \hair 2pt
    \pinlabel $k$ at 57 80  
    \endlabellist
    \centering
    \includegraphics[width=0.4\linewidth]{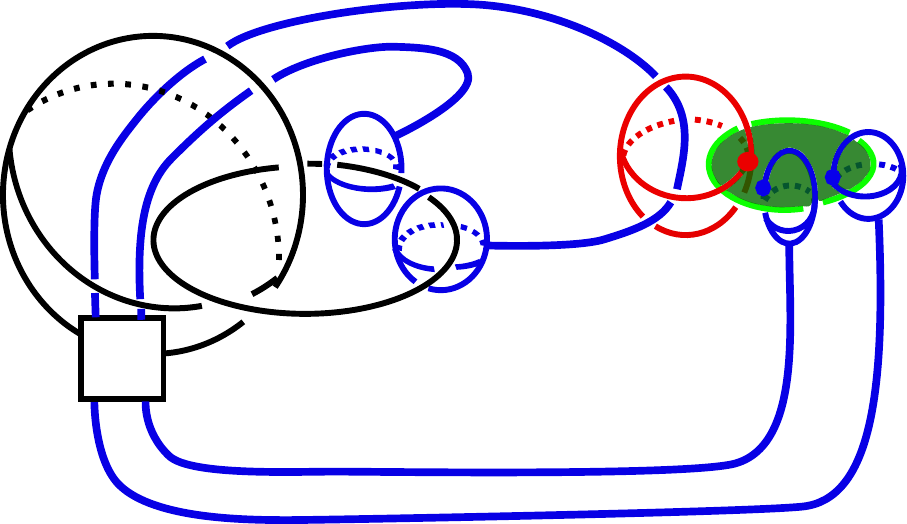}
    \caption{The manifold $S^1\tm S^2\tm I \# S^2\tm D^2$ and the barbells $\wt{\mB}_k^{\pm}$ in blue.}
    \label{F:double bb}
\end{figure}
\begin{rem}\label{R:cobordism}
    The choice of $D_G$ made in identifying $X \sm\nu \gamma$ with $X \# S^2\tm D^2$ gives rise to a cobordism from $X$ to $X \# S^2\tm S^2$ with a single 5-dimensional 2-handle. The cobordism is constructed by starting with $X\tm I$ and $D_G\subset X\tm \{1\}$, pushing the interior of $D_G$ into the interior of $X\tm I$ and removing a neighborhood. What is left of $X\tm \{1\}$ can be identified with $X \sm \nu \gamma$, so that the original $D_G$ that still resides in $X \sm \nu \gamma$ corresponds to a hemisphere of the belt sphere $G$ of the $2$-handle (whereas the neighborhood of the other hemisphere is the mentioned missing $S^2\tm D^2$). For details on such constructions we refer the reader to \cites{milnorLHCT, Gay} for example.
\end{rem}

Finally, in Figure~\ref{F:double bb} we have two barbells $\wt{\mB}_k^{+}$ and $\wt{\mB}_k^{-}$, that differ only in that $\wt{\mB}_k^{+}$ links with $R$ whereas $\wt{\mB}_k^{-}$ does not; as before, $k\geq1$ in the box indicates the linking number of the bar and the sphere $S$, and is fixed throughout the section.
\begin{defn}
    Let $\phi^{\wt{\mB}_k^{\pm}}\in\Diffp(X \# S^2\tm D^2)$ be the barbell diffeomorphism (as in Definition~\ref{D: barbell-diffeo}) on the barbell $\wt{\mB}_k^{\pm}\subset X \# S^2\tm D^2$. Moreover, let us define
\begin{equation}\label{eq:bar-phi}
   \wt{\phi} \coloneqq  \phi^{\wt{\mB}_k^{+}}\circ (\phi^{\wt{\mB}_k^{-}})^{-1} \,\in\Diffp(X \# S^2\tm D^2).\qedhere
\end{equation}
\end{defn}

The following is the first key property of $\wt{\phi}$. Note that starting from $X \# S^2\tm S^2$, we can return back to $X$ by doing surgery along the way red sphere $R$, that is, we remove a neighborhood $\nu R$ and glue in $D^3\tm S^1$ along the boundary. 

\begin{prop}\label{P:diffeo-after-surgery}
    After surgery along $R$, the diffeomorphism $\wt{\phi}$ becomes a diffeomorphism of $X$ isotopic to $\phi^{\mB_k}$.
\end{prop}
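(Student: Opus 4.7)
The plan is to decompose the problem by analyzing each of the two barbell diffeomorphisms making up $\wt\phi$ separately under the surgery on $R$. I would first extend $\wt\phi$ by the identity on the $S^2\tm D^2$ summand of $X \# S^2\tm S^2$ complementary to $X \# S^2\tm D^2$. To push this extended diffeomorphism through the surgery, I would isotope the two barbells $\wt{\mB}_k^{\pm}$ so that the supports of their barbell diffeomorphisms are disjoint from a fixed collar $\nu R$, ensuring the surgery descends to a well-defined diffeomorphism of $X$.

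The claim then splits into two pieces: (i) after surgery, $\phi^{\wt{\mB}_k^-}$ is isotopic to the identity; and (ii) after surgery, $\phi^{\wt{\mB}_k^+}$ is isotopic to $\phi^{\mB_k}$. For (i), I would use the hypothesis that $\wt{\mB}_k^-$ does not link $R$ to isotope it off $\nu R$; post-surgery, the core $3$-ball $D^3\tm\{*\}$ of the newly glued-in piece $D^3\tm S^1$ serves as a filling ball for the cuff of $\wt{\mB}_k^-$ parallel to $R$, upgrading the barbell to a grasper in $X$, so that Lemma~\ref{L:b-nullistpy} yields (i). For (ii), the bar of $\wt{\mB}_k^+$ crosses $\nu R$ transversely; I would push the crossing off through the attached $D^3\tm S^1$ and then identify the resulting barbell in $X$ with $\mB_k$ using the self-referential description recalled in Remark~\ref{rem:self-ref-barbell}. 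Geometrically, the passage of the bar through the glued-in $D^3\tm S^1$ corresponds to the wrap of the bar of $\mB_k$ seen in the self-referential isotopy of Figure~\ref{F:leaf-slide}. Combining (i) and (ii) via the formula~\eqref{eq:bar-phi}, $\wt\phi$ becomes isotopic to $\phi^{\mB_k}\circ \id = \phi^{\mB_k}$ after surgery.

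The main obstacle is (ii), the explicit identification of the post-surgery $\wt{\mB}_k^+$ with $\mB_k$: this requires carefully tracking the bar through the surgery handle (which is dual to the $2$-handle cobordism of Remark~\ref{R:cobordism}) and matching the outcome with Figure~\ref{F:leaf-slide}. Step (i), by contrast, is essentially a routine application of Lemma~\ref{L:b-nullistpy} once the correct filling ball has been located, together with bookkeeping around supports and extensions.
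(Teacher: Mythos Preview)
Your proposal is correct and mirrors the paper's proof: split into the two barbell pieces, use the core $3$-ball of the surgery to upgrade $\wt{\mB}_k^-$ to a grasper (so Lemma~\ref{L:b-nullistpy} applies), and pass the bar of $\wt{\mB}_k^+$ through the surgered handle to match the self-referential $\mB_k$ of Remark~\ref{rem:self-ref-barbell}. The only imprecision is that the bar of $\wt{\mB}_k^+$ \emph{links} $R$ rather than crossing $\nu R$ (you had already arranged the support to be disjoint from $\nu R$); the paper phrases the corresponding move as sliding the bar over the complementary hemisphere $G\sm D_G$ of the belt sphere, which is exactly your ``push through the attached $D^3\times S^1$.''
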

\begin{proof}
    Firstly, note that the barbell diffeomorphisms $\phi^{\wt{\mB}_k^{\pm}}$ descend to the surgered manifold, since they have supports disjoint from $R$.
    The effect that surgery has on Figure~\ref{F:double bb} is to erase the red-green Hopf link and modify the barbells $\wt{\mB}_k^{+}$ and $\wt{\mB}_k^{-}$ to the pair of barbells $\mB_k^{+}$ and $\mB_k^{-}$ shown in Figure~\ref{F:bb after surgery}. The depicted change comes from sliding the bar of $\wt{\mB}_k^{+}$ over the other belt sphere hemisphere $G\sm D_G$. This transfers the linking of the bar and $R$ in Figure~\ref{F:double bb} to the linking of the bar and the barbell cuffs in Figure~\ref{F:bb after surgery}. Hence, in the surgered manifold $\wt{\phi}$ is isotopic to the composition $\phi^{\mB_k^{+}}\circ (\phi^{\mB_k^{-}})^{-1}$.
\begin{figure}[!htbp]
    \labellist                             
    \hair 2pt
    \pinlabel $k$ at 57 85  
    \endlabellist
    \centering
    \includegraphics[width=0.37\linewidth]{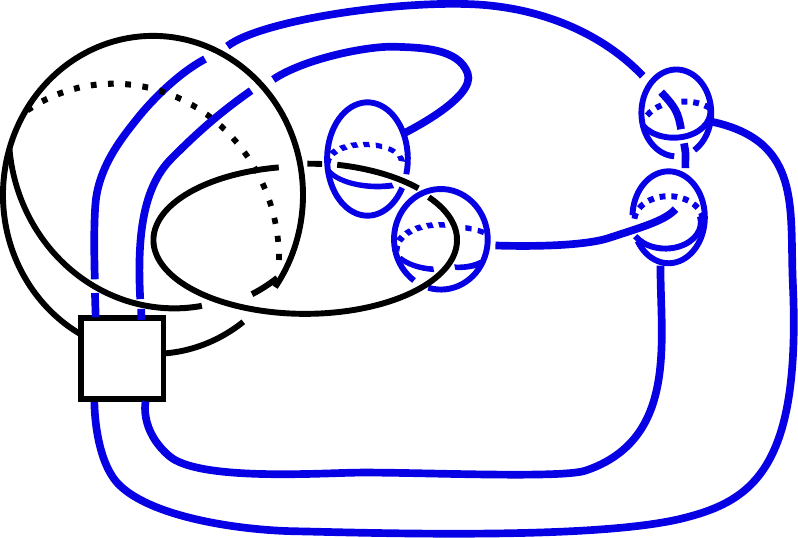}
    \caption{The pair of barbells $\wt{\mB}_k^{\pm}$ after surgery on $R$.}
    \label{F:bb after surgery}
\end{figure}

    Now thinking of the surgered manifold as $X$, notice that $\mB_k^{+}$ is precisely the self-referential version from Remark~\ref{rem:self-ref-barbell} of our barbell $\mB_k$. 
    On the other hand, the barbell $\mB_k^{-}$ extends to a grasper (one of its cuffs bounds a 3-ball with interior disjoint from $\mB_k^{-}$), so by Lemma~\ref{L:b-nullistpy} the diffeomorphism $\phi^{\mB_k^{-}}$ of $X$ is isotopic to the identity. Therefore, after surgery $\wt{\phi}$ is isotopic to $\phi^{\mB_k}$.
\end{proof}

\subsection{From barbell pairs to loops of 2-spheres}\label{subsec:BB-to-S2}
The second key property of $\wt{\phi}$ is that is in fact isotopic to the identity! The barbells $\wt{\mB}_k^{+}$ and $\wt{\mB}_k^{-}$ differ only in the linking of the bar with the sphere $R$, but this is irrelevant in $X \# S^2\tm D^2\cong X\sm \nu \gamma$. 

More precisely, a path of diffeomorphisms $\wt{\phi}_\bull$  from the identity to the diffeomorphism $\wt{\phi}$ is given by isotoping the support of $\phi^{\wt{\mB}_k^{+}}$ to agree with that of $\phi^{\wt{\mB}_k^{-}}$. That is, we use an isotopy $\wt{\mB}_k^t$ from the barbell $\wt{\mB}_k^{+}$ to $\wt{\mB}_k^{-}$. More concretely, we first unlink $\wt{\mB}_k^{+}$ from $R$ using an \emph{unlinking homotopy} $\wt{\mB}_k^t$,  $t\in [0,1]$, as in Figure \ref{F:bb isotopy}.
Then there is an isotopy $\wt{\mB}_k^t$,  $t\in [1,2]$, taking $\wt{\mB}_k^1$ to $\wt{\mB}_k^2 = \wt{\mB}_k^{-}$, and supported in $X \# S^2\tm D^2\sm \nu R$. Finally, we define
\begin{equation}
    \wt{\phi}_t = \phi^{\wt{\mB}_k^{2-2t}}\circ (\phi^{\wt{\mB}_k^{-}})^{-1} \,\in\Diffp(X \# S^2\tm D^2).
\end{equation}
for $t\in[0,1]$. We have reversed and scaled the time parameter so that $\wt{\phi}_0 = \id$, $\wt{\phi}_1 = \wt{\phi}$.
\begin{figure}[!htbp]
    \centering
    \includegraphics[width=0.5\linewidth]{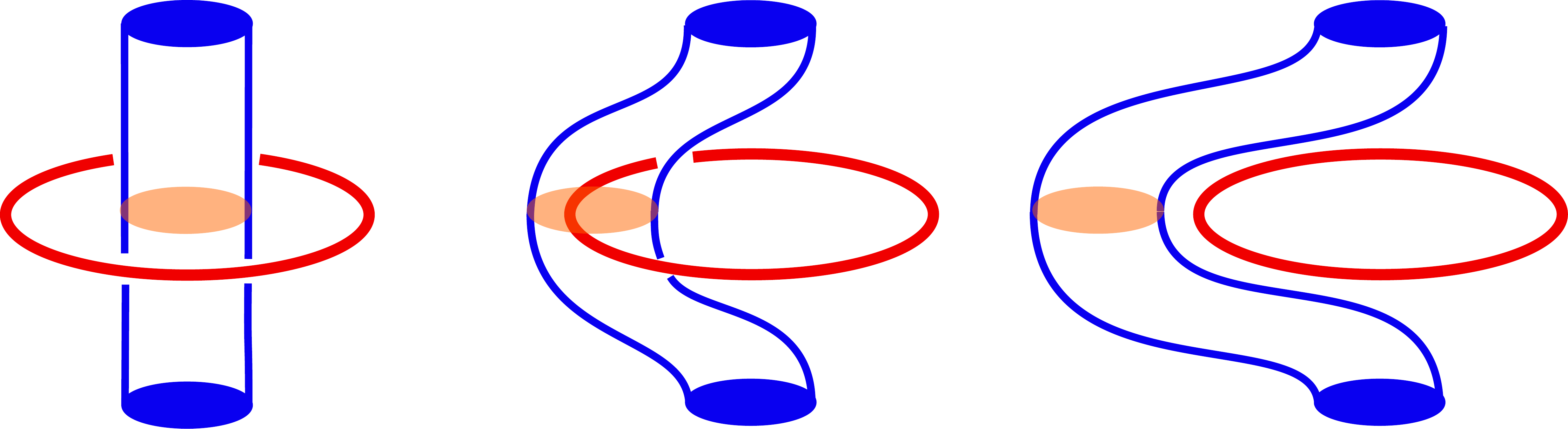}
    \caption{Unlinking homotopy.}
    \label{F:bb isotopy}
\end{figure}
\begin{defn}
    For $t\in[0,1]$ let us define
    \[
    R_t \coloneqq \wt{\phi}_t\circ R\colon\; S^2\into X \# S^2\tm D^2.
    \]
    This gives a loop of embeddings with $R_0=R_1=R$, since $\wt{\phi}_1(R)=\wt{\phi}(R)=R$ as the barbells $\wt{\mB}_k^\pm$ are both disjoint from $R$. 
\end{defn}

\begin{rem}\label{R:Stable isotopy}
    Note that $\wt{\phi}_\bull$ is by definition an ambient isotopy lifting $R_\bull$. 
    Moreover, note that Proposition~\ref{P:diffeo-after-surgery} implies that the path $\wt{\phi}_\bull$ considered in $X \# S^2\tm D^2\subset X \# S^2\tm S^2$ is a stable isotopy from the identity to $\phi^{\mB_k}$.
\end{rem}

By the definition of $\wt{\phi}_\bull$, we have that $R_t\neq R$ only when $\frac{1}{2}\leq t\leq 1$, that is, during the unlinking homotopy. In order to describe the spheres $R_t$ we now make a specific choice of this homotopy. We will assume that the intersections of the barbell and $R$ occur precisely in the midball $\Delta\colon D^2\tm D^1\into \sB$; recall from Definition~\ref{D:models} that the cross-sections $\Delta_s$ are equal to the intersection of $\sB$ with the $yz$-plane in $\R^3\tm \{s\}$. The $yz$-coordinates then gives a natural foliation of the interior of $D^2\tm \{s\}$ as $D^1\tm D^1\tm \{s\}$ for each $s$.

We can ensure that the thickened bar is transverse to $R$ except at two times $t_{I},t_{II}\in[0,1]$, when they touch for the first and last time. Moreover, there exists an $\epsilon>0$ so that for all $t\in [t_{I}+\epsilon,t_{II}-\epsilon]$, we have
\[
    \hat{R}_t\coloneqq(\wt{\mB}_k^t)^{-1}(R) = \big(\{y(t)\}\tm D^1\big)\tm D^1 \subset \Delta,
\]
where $y(t)$ is a strictly decreasing function of $t$. This is depicted in Figure~\ref{F:R-hat}: as $t$ varies in the interval $[t_{I}+\epsilon,t_{II}-\epsilon]$ the red arcs smoothly sweep across the orange disk in each cross-section simultaneously. The union of all $\hat{R}_t$ for  $t\in[t_{I}+\epsilon,t_{II}-\epsilon]$ is equal to $\Delta$.
\begin{figure}[!htbp]
    \centering
    \includegraphics[width=\linewidth]{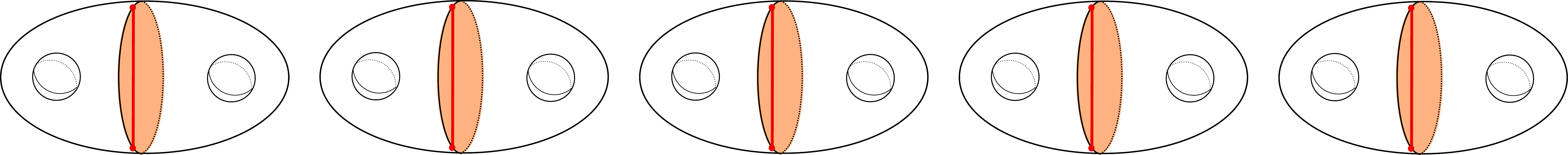}
    \caption{Cross-sections $\Delta_s\subset \sB_s$ for several values of $s\in D^1$. Depicted in red is $\hat{R}_t=(\wt{\mB}_k^t)^{-1}(R)$ for a fixed $t\in [t_{I}+\epsilon, t_{II}-\epsilon]$.}
    \label{F:R-hat}
\end{figure}

The map $\wt{\phi}_t$ leaves $R\sm \wt{\mB}_k^{2-2t}$ intact, and moves $R\cap \wt{\mB}_k^{2-2t}=\wt{\mB}_k^{2-2t}(\hat{R}_{2-2t})$ using the barbell map. Therefore, for $t\in[t_{I}+\epsilon,t_{II}-\epsilon]$ we have 
\begin{equation}\label{E: R_t}
R_t= 
    \wt{\mB}_k^{2-2t}(b(\hat{R}_{2-2t}))\cup (R\sm \wt{\mB}_k^{2-2t}).
\end{equation}
The set $b(\hat{R}_{2-2t})$ is depicted in Figure~\ref{F:loop R_t}. For $t\notin [t_{I}+\epsilon,t_{II}-\epsilon]$, we have $R_t=R_0=R$.
\begin{figure}[!htbp]
    \labellist                             
    \hair 2pt
    \pinlabel { $t_1$} at 3200 1860
    \pinlabel { $t_2$} at 3200 1435
    \pinlabel { $t_3$} at 3200 1010
    \pinlabel { $t_4$} at 3200 580
    \pinlabel { $t_5$} at 3200 155
    \endlabellist
    \centering
    \includegraphics[width=\linewidth]{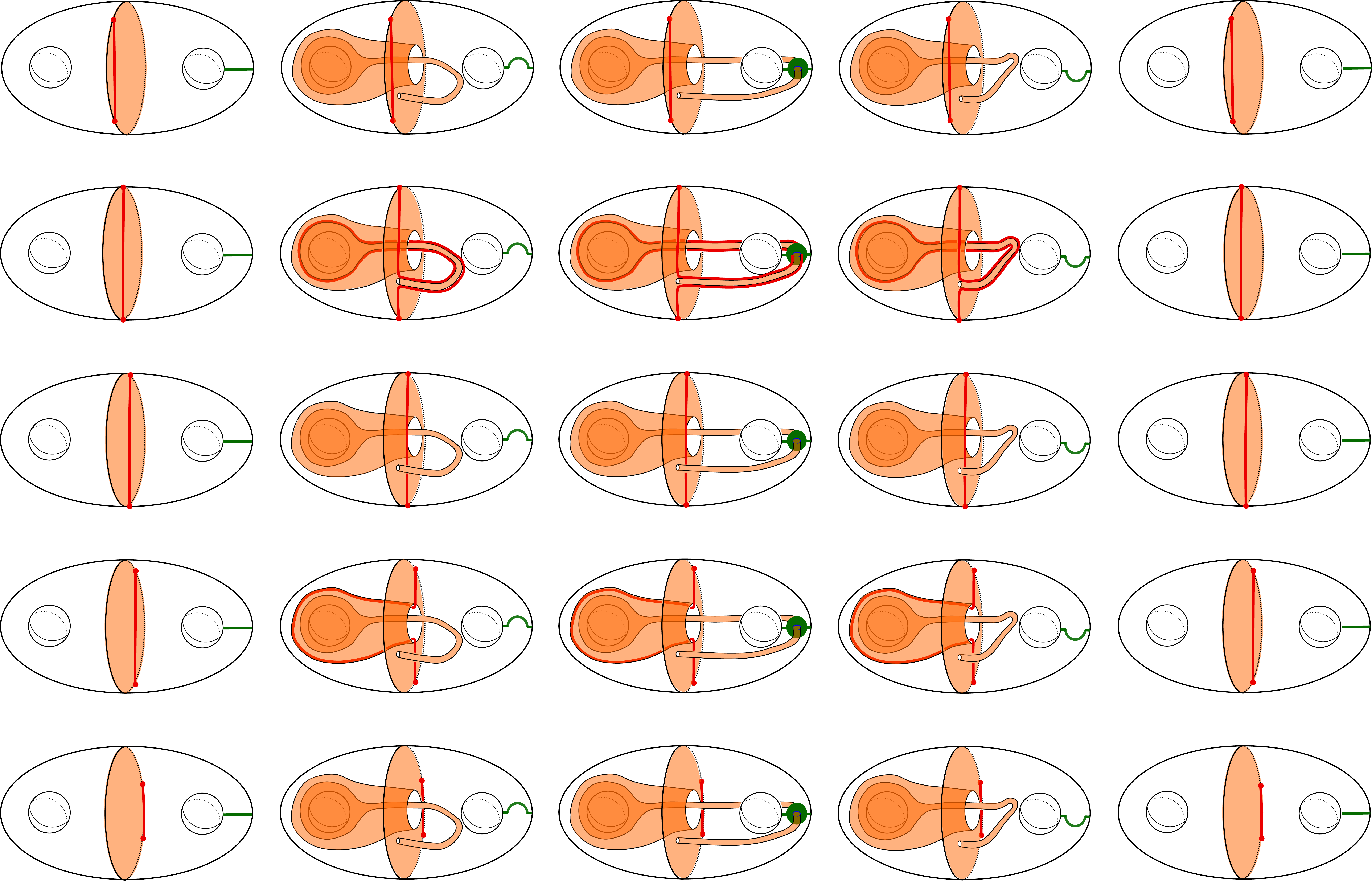}
    \caption{Each row corresponds to a fixed moment $t_i\in[t_{I}+\epsilon,t_{II}-\epsilon]$ and depicts cross-sections $\sB_s$ for several $s\in D^1$ together with the cross-sections of $b(\Delta)$ in orange, and $b(\hat{R}_{2-2t})$ in red.}
    \label{F:loop R_t}
\end{figure}

\subsection{The work of Hatcher, Wagoner, and Singh}\label{subsec:HWS}

In this section we give a terse review of pseudo-isotopy theory, Hatcher and Wagoner's second obstruction $\Theta$, as well as Singh's work needed for our computation. We refer the reader to \cites{cerf1970, HW, Hatcher-PI, Singh, Gay, gabai2024} for details.

Let $\mP_\partial(M)$ denote the space of pseudo-isotopies of a 4-manifold $M$, that is, diffeomorphisms of $M\tm I$ that restrict to the identity on $M\tm \{0\} \cup \partial M \tm I$. Note that the trace of an ambient isotopy from the identity is a level preserving pseudo-isotopy, and vise versa. To study the problem of when a pseudo-isotopy is isotopic to an isotopy for simply connected closed manifolds of dimension $n\geq 5$, Cerf~\cite{cerf1970} related isotopy classes $[F]\in\pi_0 \mP_\partial(M)$ of pseudo-isotopies to relative homotopy classes of paths of generalized Morse functions and gradient-like vector fields. In this setting he was able to show that every pseudo-isotopy is isotopic to an isotopy. 

Later, Cerf's approach was generalized to the nonsimply connected case by Hatcher and Wagoner~\cite{HW}, where they gave two obstructions to the problem. The first Hatcher--Wagoner obstruction $\Sigma$ obstructs the existence of a ``nested-eye representative'' of a pseudo-isotopy, whereas the second obstruction~$\Theta$ is defined on the classes with such a representative, and obstructs the ``cancellation of eyes''. Igusa~\cite{igusa} observed that their result needs an additional assumption -- that the first Postnikov invariant $k_1(M)$ vanishes -- and defined a correction term otherwise. 
\begin{thm}[\cite{HW,igusa}]\label{T:HW 2nd obs}
    Suppose that $k_1(M)=0$. Let $\mK < \pi_0 \mP_\partial(M)$ consist of those isotopy classes with a nested-eye representative. Then $\mK$ is in fact a subgroup and there exists a homomorphism 
    \[
        \Theta \colon\mK\longrightarrow \faktor
        {(\Z_2\tm \pi_2M)[\pi_1M]}
        {\langle \alpha \sigma - \alpha^{\tau}\tau \sigma \tau^{-1}, \alpha 1: \alpha \in \Z_2\tm \pi_2M,\sigma, \tau \in \pi_1M\rangle}\;,
    \]
    such that if $\Theta([F])\neq 0$ then $[F]\in\mK$ is not isotopic to an isotopy.
\end{thm}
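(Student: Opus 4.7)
The plan is to follow the classical parametrized Morse theory approach initiated by Cerf and extended to the non-simply connected setting by Hatcher--Wagoner, with Igusa's correction. To any $[F]\in\pi_0\mP_\partial(M)$ I would first associate a 1-parameter family $\{f_t\colon M\tm I\to I\}_{t\in I}$ of smooth functions, with $f_0=f_1$ the projection and each $f_t$ having at worst birth-death singularities, together with a family of gradient-like vector fields. Such a family is encoded by a Cerf graphic in the $(t,z)$-plane recording the critical values. The subgroup $\mK$ is then defined as the set of classes admitting a \emph{nested-eye representative}: a graphic whose only singularities are nested pairs of index-$(2,3)$ birth-death cusps (the ``eyes''). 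Vanishing of the first Hatcher--Wagoner obstruction $\Sigma$ is precisely what guarantees such a representative; the fact that $\mK$ is closed under stacking in the $I$-direction follows from a normal-form argument using the admissible two-parameter moves.

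To each eye I would associate a triple $(\varepsilon,\beta,\sigma)\in (\Z_2\tm\pi_2 M)\tm\pi_1 M$: a sign $\varepsilon$ coming from the framing of the cancellation, an element $\beta\in\pi_2 M$ represented by the 2-sphere formed by gluing the ascending disk of the index-$2$ critical point to the descending disk of the index-$3$ critical point, and an element $\sigma\in\pi_1 M$ given by a chosen reference path in $M\tm I$ from the basepoint to the eye. Summing over all eyes gives a class in the group ring $(\Z_2\tm\pi_2 M)[\pi_1 M]$. The relations $\alpha\sigma-\alpha^\tau\tau\sigma\tau^{-1}$ encode reparametrization of the reference path (with the standard $\pi_1 M$-action on $\pi_2 M$), while $\alpha\cdot 1$ kills eyes with trivial reference loop, corresponding to eyes that can be created and destroyed freely near the endpoints of the parameter interval. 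That $\Theta$ is a homomorphism follows from the additivity of eye contributions when two graphics are stacked side-by-side, mirroring the stacking group structure on $\pi_0\mP_\partial(M)$.

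The main technical step, and the hardest part of the proof, is to show independence of $\Theta$ on the choice of nested-eye representative. Any two such representatives of the same pseudo-isotopy class are connected by a generic two-parameter family of graphics; one has to enumerate the admissible local moves along such a family -- dovetails, independent handle slides, and crossings of critical values -- and verify that each changes the tuple of eye data by exactly one of the two listed relations and nothing more. This is where Igusa's refinement is essential: in general an additional correction term arises from the first Postnikov invariant, and the hypothesis $k_1(M)=0$ is precisely what rules out such contributions, so that the stated quotient suffices. Finally, the obstruction conclusion follows from the observation that any pseudo-isotopy isotopic to an honest isotopy admits a level-preserving representative, whose graphic is empty and whose $\Theta$ therefore vanishes; by the invariance just established, $\Theta([F])=0$ for any $[F]\in\mK$ that is isotopic to an isotopy. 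The expected main obstacle is the combinatorial verification of completeness of the list of two-parameter moves, which constitutes the core of \cite{HW} and \cite{igusa}.
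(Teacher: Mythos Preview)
The paper does not prove this theorem: it is stated with attribution to \cite{HW,igusa} and the reader is referred to those sources (and to \cite{Hatcher-PI,Singh}) for details. So there is no ``paper's own proof'' to compare against; your sketch is essentially an outline of the original Hatcher--Wagoner argument, which is the appropriate thing to point to here.

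That said, one point in your outline is not quite right and would cause trouble if you tried to fill it in. You describe the $\pi_2$-contribution of an eye as ``the 2-sphere formed by gluing the ascending disk of the index-$2$ critical point to the descending disk of the index-$3$ critical point''. That sphere is the attaching/belt sphere pair living in $M\#^n S^2\tm S^2$, and it is homotopically trivial in $M$ after surgery; it is not the datum that $\Theta$ records. The $\pi_2$-element actually comes from the secondary obstruction to cancelling the eye: in the middle-middle level one has an extra circle of intersections between the moving attaching sphere $R_t$ and the fixed belt sphere $G$, and the associated element of $\pi_2 M$ is the union of the finger disk and the Whitney disk along their common boundary (with the $\Z_2$-factor recording the framing discrepancy of their Whitney sections). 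This is exactly the description the paper quotes from Singh in Theorem~\ref{T:Singh}, and it is what makes $\Theta$ genuinely a \emph{secondary} invariant rather than a primary homological one. Apart from this, your account of the relations, the homomorphism property via stacking, and the role of Igusa's hypothesis $k_1(M)=0$ is accurate.
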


Here $\alpha^\tau$ denotes the standard action of $\pi_1M$ on $\pi_2M$, and the trivial action on $\Z/2$. Important for us is the fact (see for example \cite{Gay,gabai2024}) that there is a correspondence between nested-eye representatives $F$ and families
\begin{align}\label{E: Loop of embeddings}
    R_t\colon & \sqcup_{i=1}^n S^2\into M\#^n S^2\tm S^2
\end{align}
with $t\in[0,1]$, such that $R_0 = R_1 = \sqcup_{i=1}^n S^2\tm\{p_i\}$, for $p_i$ in the $i$-th component of the connected sum. In one direction, the loop $R_\bull$ arises from the ``middle-middle level'' of $F$. In the other direction, we first attach $n$ trivial 5-dimensional 2-handles to $M\times I$, so that at the top we see the belt spheres
\begin{align}\label{E: disks}
    G\coloneqq \sqcup_{i=1}^n(\{p\}\tm S^2)\into M\#^n S^2\tm S^2,
\end{align}
and then $R_\bull$ serves as a family of attaching maps for 5-dimensional 3-handles, starting and ending in cancelling position: $R_0^i \cap G^j =R_1^i \cap G^j= \delta_{ij}$. This can be interpreted as a pseudo-isotopy from the identity to some diffeomorphism $\phi$. 

Furthermore, ambiently extending the loop $R_\bull$ produces a path of diffeomorphisms $\wt{\phi}_\bull$ such that after doing surgery on $R_1$ the diffeomorphism $\wt{\phi}_1$ agrees with $\phi$; compare to Proposition~\ref{P:diffeo-after-surgery} and see  \cite{Gay,K-graspers-mcg,gabai2024}.

To compute $\Theta$ one uses the intersection locus of $R_\bull$ and $G$, defined as follows.
\begin{defn}\label{D:locus}
    Let $F\in\mP_\partial(M)$ be a a nested-eye pseudo-isotopy. Let $R_\bull$ be the associated loop of embeddings in $M\#^n S^2\tm S^2$ as in~\eqref{E: Loop of embeddings}, with the corresponding dual embedding $G$ as in~\eqref{E: disks}. Define the intersection locus
\[
    \mL_{G^j}(R_\bull^i) \coloneqq \big\{\, (p,t) \in (M\# S^2\tm S^2)\tm I~|~p\in R_t^i \cap G^j\subset (M\# S^2\tm S^2)\tm \{t\}\,\big\}.\qedhere
\]
\end{defn}

\begin{rem}
    Assuming general position, and fixing whiskers for $R^i$ and $G^j$, the space $\mL_{G^j}(R_\bull^i)$ becomes a based framed 1-manifold.
    The following is a sketch definition of $\Theta([F])$; we refer to \cite{HW, Singh} for details. For every $1\leq i,j\leq n$ the class $a_{ij}\in (\Z_2\tm \pi_2M)[\pi_1M]$ is the sum over all closed components $C\in \mL_{G^j}(R_\bull^i)$ of certain associated twisted spheres $\alpha_C \in \Z_2\tm \pi_2M$ and  group elements $\sigma_C \in \pi_1M$.
    The determines an element $\Theta(F)=[I+(a_{ij})]\in \operatorname{Wh}_1(\pi_1M;\Z_2\tm \pi_2M)$.
    This Whitehead group was shown in \cite{Hatcher-PI} to be isomorphic to the target of $\Theta$ from Theorem~\ref{T:HW 2nd obs}. 
\end{rem}

Calculating $\Theta([F])$ for general $[F]\in \mK$ and $M$ is difficult.
However, when $[F]$ is determined by a sufficiently nice loop $R_\bull$ there is a simple definition of $\Theta$, as follows.

\begin{defn}[\cite{Singh}]
    A loop $R_\bull$ associated to $F\in\mP_\partial(M)$ as above is called a \emph{geometrically simple loop} if the embedding $R_t\colon S^2\into M\# S^2\tm S^2$ is transverse to $G$ except at two distinct instances $\{t_f, t_w\}$ with $0<t_f<t_w<1$. These moments happen during a finger (resp.\ Whitney) move: just after $t_f$ (resp.\ $t_w$), the total number of intersections between $R_t$ and $G$ has increased (res.\ decreased) by two. 
\end{defn}
\begin{figure}[!htbp]
    \centering
    \includegraphics[width=0.5\linewidth]{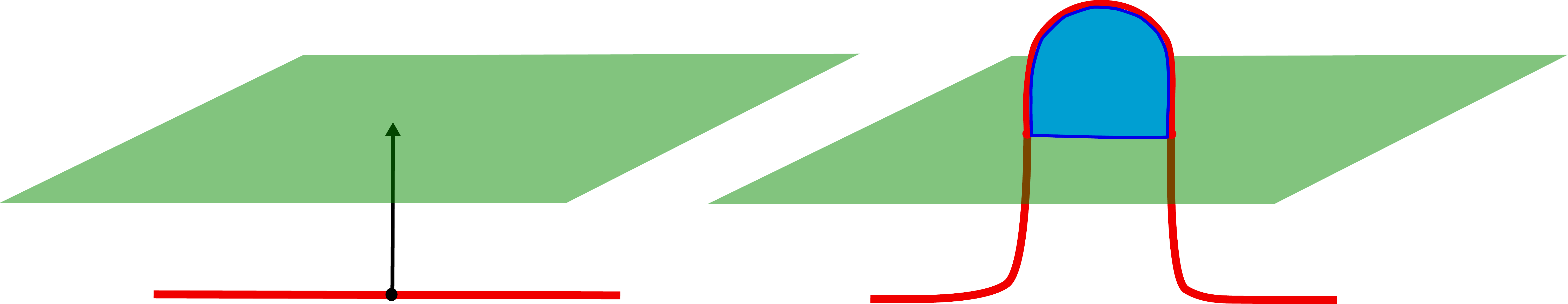}
    \caption{The finger and Whitney moves.}
    \label{F:FW move}
\end{figure}
The local picture is shown in Figure~\ref{F:FW move}: by moving from left to right one sees the finger move along the black \emph{finger move arc}, creating two new intersections, and from right to left a Whitney move along the framed blue \emph{Whitney disk}, removing the two intersections. In this local picture, these are inverse operations: just after a finger move is performed, there is a framed Whitney disk, called the \emph{finger disk}, which reverses the finger move.

\begin{thm}[\cite{Singh}]\label{T:Singh}
    If $R_\bull$ is a simple loop of embeddings, then there is at most one circle of intersections $C$ in $\mL_{G}(R_\bull)$. Let $\sigma_C\in\pi_1M$ be the loop given by following the finger move arc and the whiskers for $R_\bull$ and $G$.
    Let $FW\in\pi_2M$ be the union of the finger and Whitney disks, and let $\epsilon\in\Z_2$ be the difference mod two of the Whitney sections of the two disks along their common boundary. Let $\alpha_C=(\epsilon,FW)\in\Z_2\tm \pi_2M$. Then
    \[
        \Theta([F])=[\alpha_C \sigma_C] \in  \faktor{(\Z_2\tm \pi_2M)[\pi_1M]}{\langle \alpha \sigma - \alpha^{\tau}\tau \sigma \tau^{-1}, \alpha1\rangle}\,.
    \]
\end{thm}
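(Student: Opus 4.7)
The plan is to specialize the general Hatcher--Wagoner--Singh formula $\Theta([F]) = [I + (a_{ij})]$ (with $a_{ij} = \sum_{C'} \alpha_{C'} \sigma_{C'}$ summed over closed components $C'$ of $\mL_{G^j}(R_\bull^i)$) to a geometrically simple loop, where only one closed component contributes a nontrivial summand. So the target is really a statement about how the intersection locus simplifies in the presence of a single finger/Whitney pair.

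First I would enumerate the components of $\mL_G(R_\bull)$. For $t \in [0, t_f) \cup (t_w, 1]$ the sphere $R_t$ meets $G$ transversely in the single basepoint intersection inherited from $R_0 = R_1 = S^2 \times \{p\}$, while for $t \in (t_f, t_w)$ there are two additional transverse intersections. The basepoint intersection yields a constant arc that closes up (since $R_0 = R_1$) into a trivial stationary loop, whose whiskered loop class is $1 \in \pi_1 M$; by the relation $\alpha \cdot 1 = 0$ in the target of $\Theta$, this component contributes nothing. The two extra intersection points trace out a single closed circle $C$, born at $t_f$ and dying at $t_w$, and this is the only potentially nontrivial component. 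Hence $\Theta([F]) = [\alpha_C \sigma_C]$ reduces to the computation of a single pair.

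Second I would identify $\sigma_C$ and $\alpha_C$ by unpacking the general definitions on $C$. Traversing $C$ using the whiskers for $R_\bull$ and $G$ together with the finger move arc (which joins the two new intersection points just after $t_f$) assembles a based loop in $M \# S^2 \times S^2$; since $\pi_1$ is preserved under connect sum with $S^2 \times S^2$, this descends to a well-defined $\sigma_C \in \pi_1 M$. For $\alpha_C = (\epsilon, FW)$, note that just after $t_f$ there is a canonical framed \emph{finger disk} $D_f$ whose Whitney move reverses the finger move, while at $t_w$ one has the chosen framed Whitney disk $D_w$; both disks share the same pair of boundary arcs, so $D_f \cup D_w$ is an immersed sphere representing, after whiskering and discarding summands coming from the $\pi_2(S^2 \times S^2)$ factor (which are trivialized or quotiented out in the target), a class $FW \in \pi_2 M$. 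The invariant $\epsilon \in \Z_2$ is the mod-two difference of the Whitney sections of $D_f$ and $D_w$ along their common boundary.

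The main obstacle will be verifying that this simple-loop recipe is well-defined, i.e.\ independent of whisker choices, orientations, and the specific representative chosen within the class of geometrically simple loops. The whisker ambiguity is precisely absorbed by the relation $\alpha \sigma \sim \alpha^\tau \tau \sigma \tau^{-1}$ in the target, so one must verify that any change of whisker produces exactly a change of this form. One must also check that two different choices of finger disk and Whitney disk with the same boundary differ by an element in $\pi_2(S^2 \times S^2)$ (absorbed in the quotient) or that shifts $\epsilon$ compatibly so that $\alpha_C$ is unambiguous. Most delicate is confirming the mod-two framing formula for $\epsilon$: this requires a careful local model near the finger and Whitney moves and an identification of the $\Z_2$ factor in the target group with the relative Stiefel--Whitney class of the two Whitney sections, which is the technical heart of Singh's refinement of \cite{HW}.
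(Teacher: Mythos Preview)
The paper does not give its own proof of this statement: Theorem~\ref{T:Singh} is quoted from \cite{Singh} and used as a black box, so there is no proof in the paper to compare your proposal against. Your outline is a reasonable sketch of why the general Hatcher--Wagoner recipe collapses to a single term for a geometrically simple loop, and the well-definedness issues you flag (whisker ambiguity absorbed by the conjugation relation, framing mod two) are indeed the content of Singh's argument; but since the paper defers entirely to \cite{Singh} here, any detailed comparison would have to be with that reference rather than with the present paper.
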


\subsection{Completing the proof of Theorem C}\label{subsec:compute}

Singh computed the target of $\Theta$ explicitly when $M=X\coloneqq S^1\tm S^2\tm I$, and showed which classes obstruct the induced diffeomorphism of $S^1\tm S^2\tm I$ from being trivial in $\pi_0\Diffp(S^1\tm S^2\tm I)$. 
Let us identify $\pi_2X\cong\Z$, and $\pi_1X\cong\Z$ generated by a class $x$.

\begin{thm}[\cite{Singh}]\label{T:Singhs diffeomorphism result}
    For the manifold $X=S^1\tm S^2\tm I$, we have
\begin{equation*}
    \faktor{(\Z_2\tm \pi_2X)[\pi_1X]}{\langle \alpha \sigma - \alpha^{\tau}\tau \sigma \tau^{-1}, \alpha1\rangle} \cong \faktor{(\Z_2\tm \Z)[x, x^{-1}]}{\langle 1\rangle}\;.
\end{equation*}
    Further, if for a diffeomorphism $\phi$ there is a pseudo-isotopy $F$ to the identity, such that $q\circ\Theta([F])$ is nontrivial for the quotient homomorphism
    \[
    q\colon\faktor{(\Z_2\tm \Z)[x, x^{-1}]}{\langle 1\rangle}
    \ra \faktor{(\Z_2\tm \Z)[x, x^{-1}]}{\langle 1, x^k-x^{-k}\rangle}\;,
    \]
    then $\phi$ is not isotopic to the identity. That is $\phi\neq0\in\pi_0\Diffp(S^1\tm S^2\tm I)$.
\end{thm}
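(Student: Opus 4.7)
\emph{The plan is to} handle the two assertions separately. The first is a direct computation of the Hatcher--Wagoner target. Since $X = S^1 \tm S^2 \tm I$ deformation-retracts to $S^1 \tm S^2$, we have $\pi_1 X \cong \Z$ generated by the $S^1$-factor class $x$ and $\pi_2 X \cong \Z$ generated by a slice $\{p\}\tm S^2\tm\{1/2\}$. The universal cover $\R\tm S^2\tm I$ has deck transformations acting by translation in the first factor and trivially on $S^2$, so the $\pi_1$-action on $\pi_2$ is trivial, giving $\alpha^\tau = \alpha$. Moreover $\pi_1 X$ is abelian, so $\tau\sigma\tau^{-1} = \sigma$. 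Hence the relation $\alpha\sigma - \alpha^\tau\tau\sigma\tau^{-1}$ automatically vanishes, and only $\alpha \cdot 1 = 0$ survives. Since $(\Z_2\tm\Z)[\Z]$ identifies with $(\Z_2\tm\Z)[x, x^{-1}]$ and the $\alpha \cdot 1$ relation kills exactly the constant coefficient, the first isomorphism follows.

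\emph{For the second assertion}, by Theorem~\ref{T:HW 2nd obs} the invariant $\Theta$ is defined on $\mathcal{K} < \pi_0 \mP_\partial(X)$, not directly on $\pi_0\Diffp(X)$. To pass from pseudo-isotopies to diffeomorphisms, I would observe that any two pseudo-isotopies $F, F'$ from a given $\phi$ to the identity differ (after stacking with a reversed copy) by an element lying over the identity diffeomorphism, so $\Theta([F])$ is well-defined on $\phi$ modulo the $\Theta$-image of the endpoint-evaluation kernel $\mathcal{K}_{\mathrm{id}} \coloneqq \ker\bigl(\mathcal{K}\to\pi_0\Diffp(X)\bigr)$. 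The criterion then amounts to the claim that $\Theta(\mathcal{K}_{\mathrm{id}}) \subseteq \langle 1, x^k - x^{-k}\rangle$: if $\phi$ is isotopic to the identity, the trace of the isotopy is a level-preserving pseudo-isotopy with vanishing $\Theta$, so $\Theta([F])$ for any other $F$ lies in this indeterminacy, and the contrapositive is the stated criterion.

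\emph{The main obstacle} is identifying explicit generators of $\Theta(\mathcal{K}_{\mathrm{id}})$ whose images span $\langle x^k - x^{-k}\rangle$ modulo $\langle 1\rangle$. I would try to construct these from loops of diffeomorphisms that wrap $k$ times around the $S^1$-factor, realized by swinging a pair of cancelling finger/Whitney points in opposite directions along $S^1$. Such a loop produces a pseudo-isotopy of the identity whose intersection locus in the sense of Definition~\ref{D:locus} consists of closed components labelled by $x^k$ and $-x^{-k}$, contributing $x^k - x^{-k}$ to $\Theta$ by the formula of Theorem~\ref{T:Singh}. Verifying that these loops exhaust $\Theta(\mathcal{K}_{\mathrm{id}})$ modulo $\langle 1\rangle$ --- and that the stated quotient is exactly as large as it should be --- requires a careful analysis of $\pi_1\Diffp(X)$ together with the Cerf-type correspondence between loops of diffeomorphisms and pseudo-isotopies of the identity, and is where the delicate part of Singh's argument lies.
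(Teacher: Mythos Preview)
The paper does not prove this theorem: it is quoted from Singh~\cite{Singh} and used as a black box, so there is no proof in the paper to compare against.

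That said, your sketch of the first assertion is correct and essentially complete. Since $X$ retracts to $S^1\tm S^2$, the deck transformations of the universal cover $\R\tm S^2$ act trivially on $H_2$, so the $\pi_1$-action on $\pi_2$ is trivial; together with the abelianness of $\pi_1X$ this forces the conjugation relation $\alpha\sigma-\alpha^\tau\tau\sigma\tau^{-1}$ to vanish identically, leaving only the constant-term relation $\alpha\cdot 1$.

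For the second assertion your outline is the right shape: the value $\Theta([F])$ descends to an invariant of $\phi$ only modulo $\Theta$ of the \emph{inertia subgroup} $\mathcal{K}_{\mathrm{id}}$ of pseudo-isotopies of the identity, and the content of Singh's result is precisely that this image is contained in the subgroup generated by the elements $x^k-x^{-k}$ (for all $k\geq1$) together with the constant term. You correctly identify this containment as the delicate step and defer it; what Singh actually does is analyze how the $\Theta$-value of a one-parameter family changes under the action of $\pi_1\Diffp(X)$, in particular under loops of diffeomorphisms coming from rotating the $S^1$ factor, and shows these account for exactly the $x^k-x^{-k}$ indeterminacy. Your description of the mechanism (``swinging a pair of cancelling finger/Whitney points in opposite directions'') is heuristically on target but would need to be made precise via this action. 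Since you explicitly leave this step to Singh, your proposal is a correct plan rather than a proof, but nothing more is present in the paper either.
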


In order to compute the Hatcher--Wagoner invariant $\Theta$ for our barbell diffeomorphisms $\phi^{\mB_k}$, we need to analyze the intersection locus $\mL_{G}(R_\bull)$ (see Definition~\ref{D:locus}) between the belt sphere $G$ our loop $R_\bull$ defined in Section~\ref{subsec:BB-to-S2}. We first determine the intersection set. 

\begin{lem}\label{L: locus}
   We have $\bigcup_{t\in[0,1]}(G\cap R_t) = (D_G\cap R) \cup (D_G\cap\mB^{+}_k(b(\Delta)))$.
\end{lem}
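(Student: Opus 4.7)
The plan is to compute $\bigcup_{t\in[0,1]}(G\cap R_t)$ directly by unpacking the piecewise formula for $R_t$ from Section~\ref{subsec:BB-to-S2} and combining it with the splitting $G=D_G\cup(G\sm D_G)$.

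First, I would reduce to an intersection with $D_G$. The hemisphere $G\sm D_G$ lies in the added $S^2\tm D^2$ summand of $X\# S^2\tm S^2$, whereas the barbells $\wt{\mB}_k^{\pm}$, the unlinking homotopy, the sphere $R$, and hence the entire ambient isotopy $\wt{\phi}_\bull$ all live in $X\sm\nu\gamma\cong X\# S^2\tm D^2$. Therefore each $R_t=\wt{\phi}_t(R)\subset X\sm\nu\gamma$ is disjoint from $G\sm D_G$, so $G\cap R_t=D_G\cap R_t$.

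Second, I would split $[0,1]$ into the touching interval $[t_I,t_{II}]$ and its complement. Outside the touching interval $\wt{\mB}_k^{2-2t}$ is disjoint from $R$, so $R_t=R$ and the contribution to the union is $D_G\cap R$. Inside, formula~\eqref{E: R_t} decomposes
\[
R_t=\wt{\mB}_k^{2-2t}(b(\hat{R}_{2-2t}))\cup(R\sm\wt{\mB}_k^{2-2t}),
\]
so $D_G\cap R_t$ splits into a moving piece $D_G\cap\wt{\mB}_k^{2-2t}(b(\hat{R}_{2-2t}))$ and a stationary piece contained in $D_G\cap R$, which is already accounted for.

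The third and main step is to establish
\[
\bigcup_{t\in[t_I,t_{II}]} \bigl(D_G\cap\wt{\mB}_k^{2-2t}(b(\hat{R}_{2-2t}))\bigr) = D_G\cap\mB^{+}_k(b(\Delta)).
\]
This is the main obstacle: the embedding $\wt{\mB}_k^{2-2t}$ varies with $t$, so a priori the left-hand side is a union over different embedded 2-disks. The argument relies on two ingredients. First, the slicing $\hat{R}_s=\{y(s)\}\tm D^1\tm D^1$ with $y$ monotone through $D^1$ gives $\bigcup_s\hat{R}_s=\Delta$, hence $\bigcup_s b(\hat{R}_s)=b(\Delta)$ inside $\sB$. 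Second, the unlinking homotopy $\wt{\mB}_k^s$ can be chosen so that it routes the bar through its transverse intersection with $R$ at a point away from $D_G$, which ensures $\wt{\mB}_k^s(b(\Delta))\cap D_G=\mB^{+}_k(b(\Delta))\cap D_G$ for every $s$. Under such a choice, each moving contribution becomes $D_G\cap\mB^{+}_k(b(\hat{R}_{2-2t}))$, and taking the union over $t$ yields $D_G\cap\mB^{+}_k(b(\Delta))$ by the sweep. Combining this with the first two steps completes the proof.
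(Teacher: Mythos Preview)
Your proposal is correct and follows essentially the same route as the paper: reduce to $D_G$ since the other hemisphere lies in the unused summand, split $R_t$ via \eqref{E: R_t} into a stationary piece contributing $D_G\cap R$ and a moving piece, and then argue that the unlinking homotopy is supported away from $D_G$ so that $\wt{\mB}_k^{2-2t}$ may be replaced by $\wt{\mB}_k^{+}$ there, after which the sweep $\bigcup_s\hat{R}_s=\Delta$ finishes the computation. One small wording issue: the set-level equality $\wt{\mB}_k^s(b(\Delta))\cap D_G=\wt{\mB}_k^{+}(b(\Delta))\cap D_G$ that you state is not by itself enough to conclude the slice-by-slice equality $D_G\cap\wt{\mB}_k^{s}(b(\hat{R}_s))=D_G\cap\wt{\mB}_k^{+}(b(\hat{R}_s))$; what you actually need (and what your justification ``the homotopy is supported away from $D_G$'' does give) is that the \emph{maps} $\wt{\mB}_k^s$ all agree near the preimage of $D_G$, which is exactly how the paper phrases it.
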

\begin{proof}
    The hemisphere $G\sm D_G$ is not used in the construction of $R_\bull$ so $G\cap R_t=D_G\cap R_t$.
    By~\eqref{E: R_t} we have $R_t=\wt{\mB}_k^{2-2t}(b(\hat{R}_{2-2t}))\cup (R\sm \wt{\mB}_k^{2-2t})$ when $\wt{\mB}_k^{2-2t}\cap R\neq \varnothing$, and $R_t = R$ otherwise.
    Thus, each $D_G\cap R_t$ consists of $D_G\cap R$ and possibly $D_G\cap \wt{\mB}_k^{2-2t}(b(\hat{R}_{2-2t}))$. 
    
    From Figure~\ref{F:double bb} we see that the only intersection of $\wt{\mB}_k^{2-2t}$ and the green disk $D_G$ is a neighborhood of the point where one cuff of $\mB^{+}_k$ intersects $D_G$, and is independent of $t$. In other words, we can replace $\wt{\mB}_k^{2-2t}$ by $\wt{\mB}_k^0=\wt{\mB}_k^{+}$. But the union of $\hat{R}_{2-2t}\subset\Delta$ for all $t$ covers the whole $\Delta$, so in total we get $D_G\cap\mB^{+}_k(b(\Delta))$.
\end{proof}
We have that $D_G\cap R=\{p\}$, so this gives a component $\{p\}\times I$ of $\mL_{G}(R)$ that will contribute nothing to the invariant $\Theta$. 
Next, we claim that $D_G\cap\mB^{+}_k(b(\Delta))$ consists of a single circle. We apply $(\mB^{+}_k)^{-1}$ to this and work inside of the model barbell $\sB$. Here $D_G$ appears as the collection of horizontal arcs (as in the leftmost picture of Figure~\ref{F:G-fixed}) in each cross-section $\sB_s$. After an isotopy of $D_G$ (a rotation in $sz$-plane) we can arrange that the intersection between $D_G$ and the knotted midball $b(\Delta)$ (given in Figure \ref{F:midball}) is confined to $b(\Delta_0)\subset\sB_0$ as in Figure~\ref{F:G-fixed}. There we see a circle of intersection where the thin orange tube pierces through the green disk.
\begin{figure}[!htbp]
    \centering
    \includegraphics[width=\linewidth]{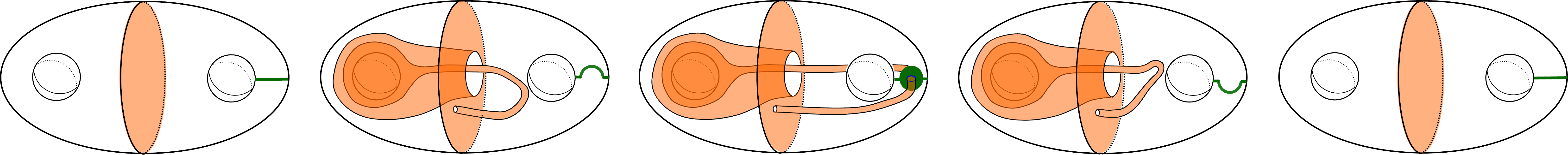}
    
    \vspace{7pt}
    
    \includegraphics[width=0.3\linewidth]{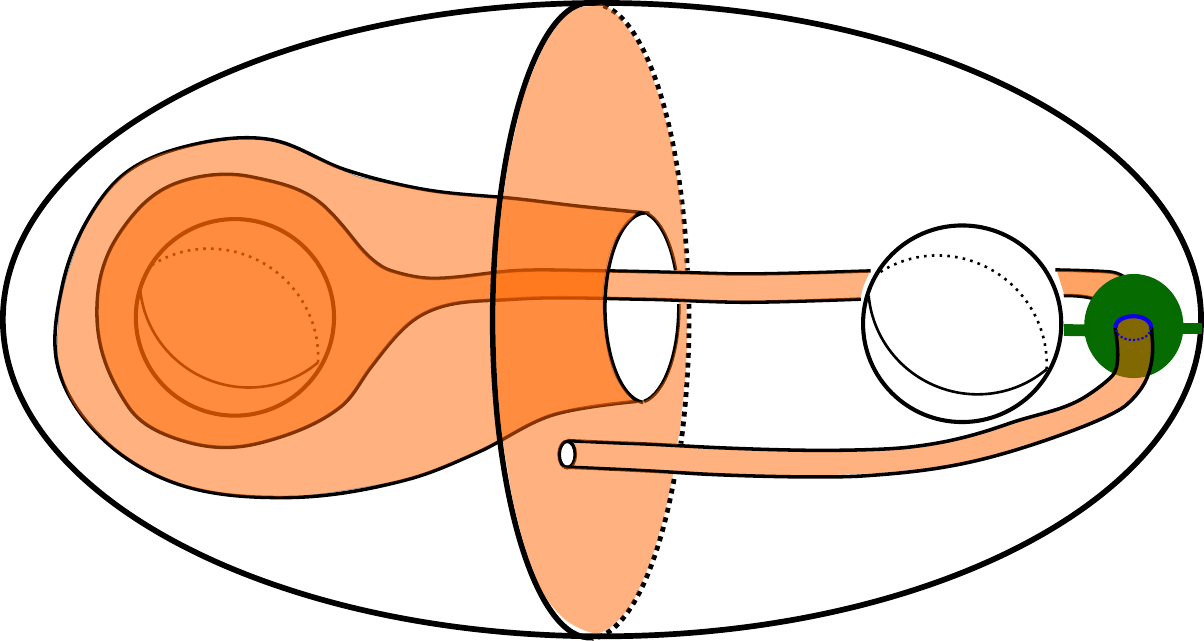}
    
    \caption{Several cross-sections $\sB_s$ of $\sB$, containing the preimage of $D_G$ (after a rotation) in green. The intersection of the preimage of $D_G$ with $b(\Delta)$ is contained in $\sB_0$, whose enlarged version is shown in the bottom row.}
    \label{F:G-fixed}
\end{figure}

Finally, observe that the foliation of $b(\Delta)$ by the arcs $R_t$ for $t\in[t_{I}+\epsilon,t_{II}-\epsilon]$, gives the foliation of the circle $D_G\cap b(\Delta)$ by pairs of points, except for two moments when it touches it for the first and last time. This implies that we obtain a single component $C$ in $\mL_{G}(R_\bull)$, apart from the mentioned arc coming from $D_G\cap R$.

\begin{thm}\label{T:BB computation}
    Let $R_t = \wt{\phi}_t(R)\colon S^2\into X \# S^2\tm D^2 \subset X \# S^2\tm S^2$ be the loop from~\eqref{E: R_t} and $C$ the component of $\mL_{G}(R)$ described above. Then $\alpha_C = (0,1)\in \Z_2\tm\pi_2X\cong\Z/2\tm \Z$ and $\sigma_C = x^k\in\langle x\rangle\cong\pi_1X$. 
\end{thm}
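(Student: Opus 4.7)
The plan is to verify that $R_\bull$ is a \emph{geometrically simple loop} in the sense of Singh and then apply Theorem~\ref{T:Singh} directly. From the description immediately preceding the theorem, the only nontrivial component of $\mL_G(R_\bull)$ is the single circle $C$ arising from the foliation of $D_G \cap \mB^{+}_k(b(\Delta))$ by the pairs of intersection points $R_t \cap G$, for $t \in [t_I+\epsilon, t_{II}-\epsilon]$. The loop $R_\bull$ fails to be transverse to $G$ at exactly two moments $t_f < t_w$, corresponding to the instants at which the sweeping arcs $\hat{R}_t$ first enter and last exit the slice $b(\Delta_0) \subset \sB_0$. At $t_f$ two intersections $R_t \cap G$ are created (a finger move), and at $t_w$ they are cancelled (a Whitney move). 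This verifies geometric simplicity, so it suffices to compute $\sigma_C$ and $\alpha_C$ by inspecting the local picture.

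To compute $\sigma_C$, I follow the finger move arc on $R_{t_f + \delta}$: it runs through $\mB^{+}_k(b(\Delta))$ along the thin orange tube, which is a thickening of the bar of the barbell $\mB^{+}_k$. Closed up with the fixed whiskers on $R$ and $G$, it represents a loop in $X$ whose linking number with $S$ equals that of the bar with $S$, which is $k$ by definition of $\mB_k$. Under the identification $\pi_1 X \cong \Z = \langle x \rangle$ with $x$ the meridian of $S$, this gives $\sigma_C = x^k$.

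To compute $\alpha_C = (\epsilon, FW)$, I observe that the finger disk $D_f$ and the Whitney disk $D_w$ together form a 2-sphere that meets $G$ transversely in a single point (the subdisk of $D_G$ bounded by the intersection circle $D_G \cap \mB^{+}_k(b(\Delta))$). Projecting to $\pi_2 X \cong \Z$, where the generator is a meridian 2-sphere of $c$, and noting that $R$ and this generator both pair to $1$ with $G$, we read off $FW = 1$. For the $\Z/2$ part, both $D_f$ and $D_w$ inherit their Whitney framings from the canonical product trivialization along the sweeping direction of the foliation $\hat R_t$ across the tube in $b(\Delta)$; since this trivialization extends without a half-twist from one tangency to the other, the two Whitney sections agree mod $2$, yielding $\epsilon = 0$.

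The main obstacle is the last step: verifying that the Whitney sections really do agree mod $2$. I would handle this by working in the local product model of the tube portion of $\Delta \subset \sB$ provided by Definition~\ref{D:models}, where the sweeping direction is manifestly trivial and the normal framing is pulled back from a single section on the intersection circle. The other subtlety is checking that the component $C$ is the only circle in $\mL_G(R_\bull)$ to be counted; the arc coming from the basepoint intersection $\{p\} = D_G \cap R$ in Lemma~\ref{L: locus} is not closed and contributes nothing, as noted in the paragraph after that lemma.
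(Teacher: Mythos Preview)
Your computation of $\sigma_C$ is correct and matches the paper: the finger move arc follows the bar of $\wt{\mB}_k^{+}$, and closed up with whiskers it links $S$ exactly $k$ times.

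The argument for $FW=1$, however, has a genuine gap. Pairing the $FW$ sphere with $G$ does \emph{not} compute its class in $\pi_2X$. In $H_2(X\#S^2\tm S^2)$ the intersection number with $G$ detects the $[R]$--component, not the $\pi_2X$--component: the generator of $\pi_2X$, represented by $\{pt\}\tm S^2\tm\{pt\}\subset S^1\tm S^2\tm I$, can be taken disjoint from the connect-sum region and hence from $G$, so it pairs to $0$ with $G$, not $1$ as you assert. Thus even if $FW\cdot G=1$ held, this would say nothing about the image of $[FW]$ in $\pi_2X$. (There is also a geometric issue: the finger and Whitney disks share a boundary arc lying on $G$, so the naive union is not transverse to $G$.) The paper's fix is to identify the $FW$ sphere directly: sweeping $R_t$ forward and backward in time from the middle moment, the two disks visibly cover the two halves of a parallel copy of a barbell cuff. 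Since each cuff of $\mB_k$ is a meridian sphere to $c$, it represents the generator of $\pi_2X$, giving $FW=1$ without any intersection-number bookkeeping.

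Your $\epsilon=0$ justification is in the right spirit but vague; once one knows $FW$ is a cuff, the paper's argument is cleaner: the cuff is a framed sphere in $X$, so the two Whitney sections automatically agree.
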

\begin{proof}
    We analyze the preimage of $R_\bull$ in the model barbell $\sB$ as in Figure \ref{F:loop R_t}. After arranging $D_G$ as in Figure~\ref{F:G-fixed}, we can restrict our attention to the middle column of Figure~\ref{F:loop R_t}. We identify the finger move arc, finger disk and Whitney disk in the middle-middle level as in Figure~\ref{F:FW-identification}. The finger and Whitney disks clearly fit together to give a sphere parallel to the cuff~$S_1$, and the Whitney sections agree as this sphere is framed. Since $S_1$ represents the generator of $\pi_2(X)$, it follows that $\alpha_C = (0,1)\in \Z_2\tm \Z$. 

    It remains to determine $\sigma_C\in\pi_1X$. This is defined by closing the finger move arc to a loop using whiskers from each of $R$ and $G$ to the basepoint. 
    The finger move arc is depicted at the top of Figure~\ref{F:Fmove-identification}. It is homotopic to the segment of the bar of $\wt{\mB}_k$ going from the midball $\Delta$ to the cuff $S_2$, together with a segment in $S_2$ to the intersection with $G$. In the ambient picture at the bottom of Figure~\ref{F:Fmove-identification} we see that the whiskers complete the finger move arc to the loop that follows the bar and gives $\sigma_C=x^k$. 
\end{proof}
\begin{figure}[!htbp]
    \centering
    \begin{subfigure}[c]{0.4\textwidth}
        \labellist                       
        \small\hair 2pt
        \pinlabel $F$ at 130 285
        \pinlabel $W$ at 1150 285
        \pinlabel $S_1$ at 650 350
        \pinlabel $S_1$ at 440 770
        \pinlabel {\rule{1cm}{0.5pt}} at 510 350
        \pinlabel {\rule{1cm}{0.5pt}} at 785 350
        \endlabellist
        \centering
        \includegraphics[width=\linewidth]{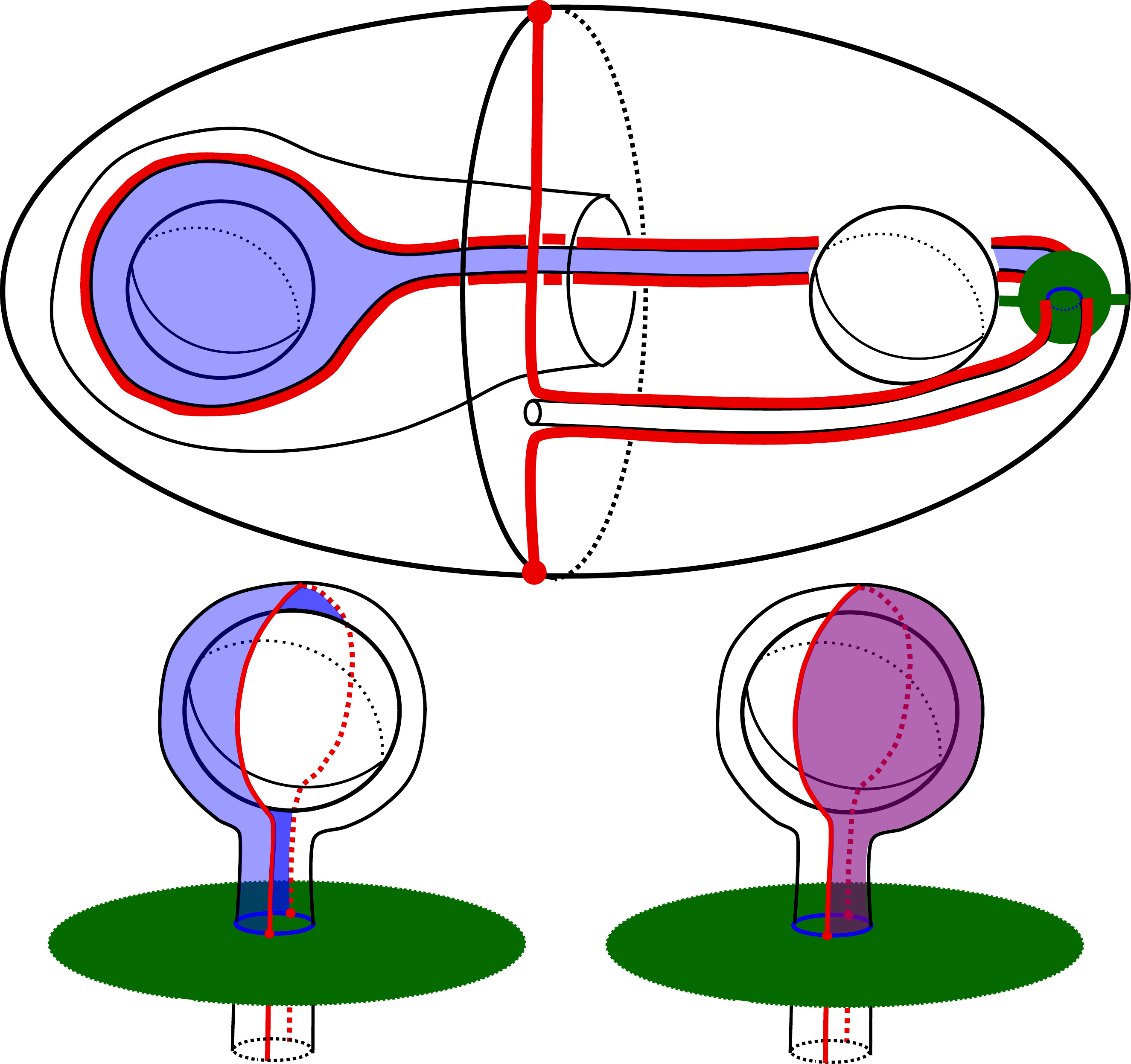}
        \caption{The identification of the finger and Whitney disks for $R_\bull$. The top picture shows the red arc $R_{t_2}\subset\sB_0$, which intersects $G$ in two points. As $R_t$ sweeps across $b(\Delta)$ backward and forward in time from $t_2$, it covers two disks -- the finger and Whitney disks. They are schematically depicted in the bottom picture, which shows that their union is $S_2$.}
        \label{F:FW-identification}     
    \end{subfigure}
    \hspace{1cm}
    \begin{subfigure}[c]{0.4\textwidth}
        \labellist                       
        \small\hair 2pt
        \pinlabel $k$ at 190 275
        \pinlabel $S_1$ at 700 350
        \pinlabel $S_1$ at 420 1180
        \pinlabel $S_2$ at 1350 550
        \pinlabel $S_2$ at 900 1180
        \endlabellist
        \includegraphics[width=0.9\linewidth]{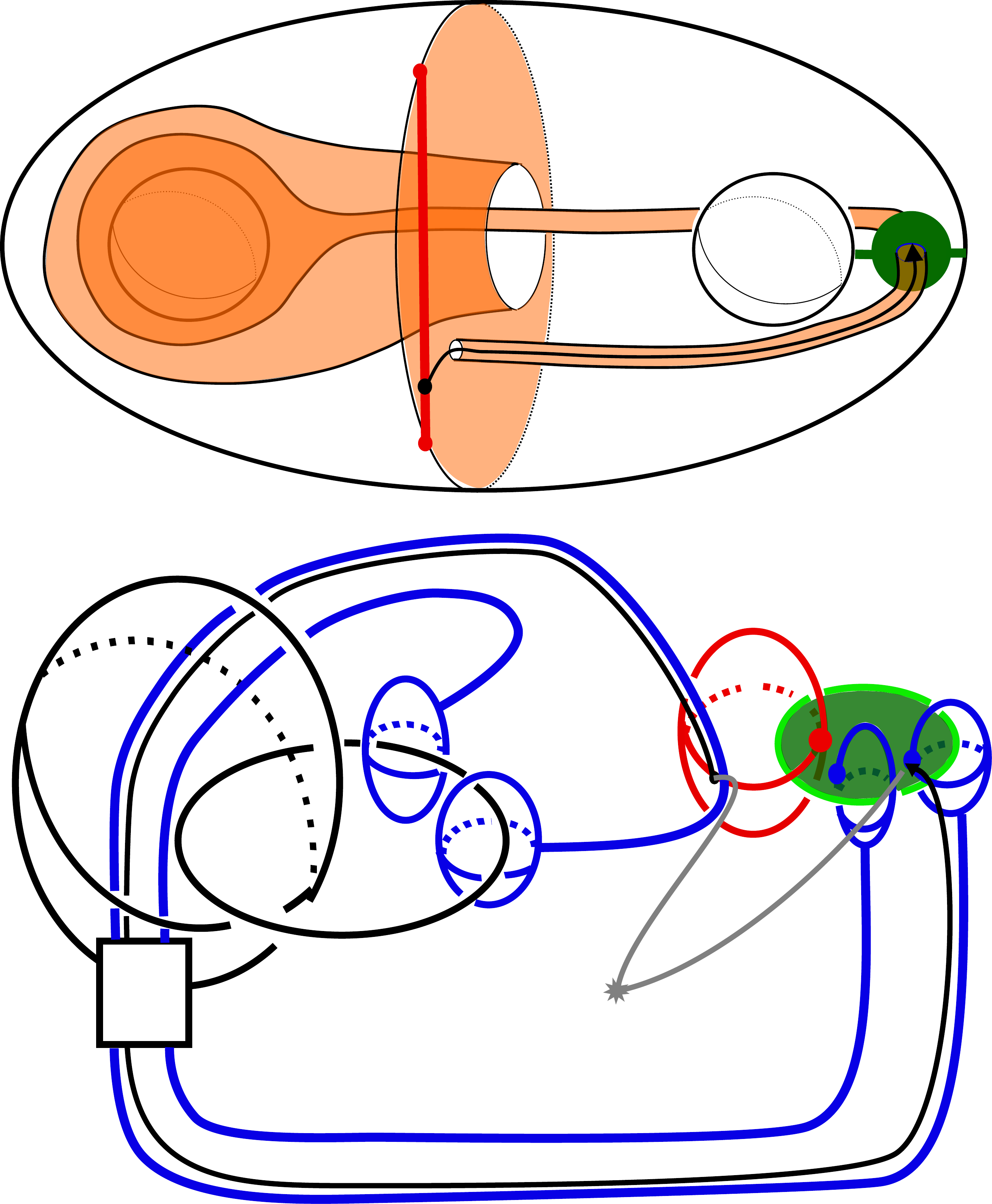}
         \caption{On top is the finger move arc in $\sB$, in black. Below is the the same arc drawn ambiently in $\mB_k^{+}$, following the bar for $\mB^k$ from $R$ to $G$, together with the gray whiskers for $R$ and $G$ to the basepoint.}
        \label{F:Fmove-identification}
    \end{subfigure}
    \caption{}
\end{figure}

We can now prove Theorem~\ref{mainT:PI}.
\begin{thm}
    For every $k\geq 1$, the barbell diffeomorphism $\phi^{\mB_k}$ is pseudo-isotopic but not isotopic to the identity, and this is detected by the Hatcher--Wagoner $\Theta$ obstruction. Moreover, the classes $\phi^{\mB_k}$ are all mutually distinct.
\end{thm}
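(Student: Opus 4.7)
The plan is to assemble the ingredients set up throughout Section~\ref{sec:PI}. The loop $R_\bull$ from Section~\ref{subsec:BB-to-S2} determines, via the correspondence recalled in Section~\ref{subsec:HWS}, a nested-eye pseudo-isotopy $F_k\in\mP_\partial(X)$. By Remark~\ref{R:Stable isotopy} the ambient extension $\wt{\phi}_\bull$ is a stable isotopy from the identity to $\phi^{\mB_k}$, so $F_k$ is in particular a pseudo-isotopy from the identity to $\phi^{\mB_k}$. This already shows that $\phi^{\mB_k}$ is pseudo-isotopic to the identity and that $[F_k]\in\mK$.

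Second, I would verify that $R_\bull$ is geometrically simple: by construction the thickened bar of $\wt{\mB}_k^{+}$ meets $R$ transversally at exactly two moments $t_I<t_{II}$, corresponding respectively to a finger move and a Whitney move. Theorem~\ref{T:Singh} together with Theorem~\ref{T:BB computation} then yield
\[
\Theta([F_k]) \;=\; [(0,1)\cdot x^k] \;\in\; \frac{(\Z_2\tm\Z)[x,x^{-1}]}{\langle 1\rangle},
\]
using the identification of the target in Theorem~\ref{T:Singhs diffeomorphism result}.

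To conclude nontriviality and detection, I apply the quotient map $q$ of Theorem~\ref{T:Singhs diffeomorphism result} with the same integer $k$. The added relation $x^k-x^{-k}$ only identifies $(0,1)x^k$ with $(0,1)x^{-k}$, and the relation $\alpha\cdot 1$ only affects the constant term, so neither kills $(0,1)x^k$; hence $q\circ\Theta([F_k])\neq 0$ and $\phi^{\mB_k}$ is detected by $\Theta$ as nontrivial in $\pi_0\Diffp(X)$. For pairwise distinctness, suppose that $[\phi^{\mB_k}]=[\phi^{\mB_{k'}}]$ in $\pi_0\Diffp(X)$ for some $1\leq k'<k$. Then $\phi^{\mB_k}\circ(\phi^{\mB_{k'}})^{-1}$ is isotopic to the identity, and since $\mK$ is a subgroup and $\Theta$ is a homomorphism to an abelian group, the concatenated pseudo-isotopy $F_k\cdot F_{k'}^{-1}\in\mK$ has invariant $[(0,1)(x^k-x^{k'})]$ in the same quotient. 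Applying $q$ for the integer $k$, comparing coefficients of $x^{k'}$ shows that no combination of the relations $1$ and $x^k-x^{-k}$ can eliminate this element (we use $k'\geq 1$ and $k'\neq k$, so $k'\neq\pm k$), contradicting Theorem~\ref{T:Singhs diffeomorphism result}. I expect the main obstacle of this plan to be precisely Theorem~\ref{T:BB computation}, i.e.\ the explicit identification of the finger move arc, the finger disk, and the Whitney disk inside the model barbell $\sB$; the rest of the argument is a formal unpacking of Singh's machinery.
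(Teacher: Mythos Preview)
Your proposal is correct and follows essentially the same route as the paper: assemble Proposition~\ref{P:diffeo-after-surgery} and Remark~\ref{R:Stable isotopy} to get the pseudo-isotopy $F_k$, then feed Theorem~\ref{T:BB computation} into Singh's Theorems~\ref{T:Singh} and~\ref{T:Singhs diffeomorphism result}. One small correction: the quotient $q$ in Theorem~\ref{T:Singhs diffeomorphism result} imposes $x^k-x^{-k}$ for \emph{all} $k$ simultaneously (it is the inertia quotient, independent of any chosen integer), so your phrase ``apply $q$ with the same integer $k$'' and the coefficient comparison should be reworded; the conclusion is unaffected since $1\le k'<k$ still gives $k'\neq\pm k$, and the paper simply observes that the values $q\Theta([F_k])=(0,1)x^k$ are already pairwise distinct in that quotient.
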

\begin{proof}
   By Remark~\ref{R:Stable isotopy} the ambient extension $\wt{\phi}_\bull$ $R_\bull$ has the endpoint $\wt{\phi}_1=\wt{\phi}$ that is by Proposition~\ref{P:diffeo-after-surgery} isotopic to $\phi^{\mB_k}$  after surgery on $R$. By the discussion just after Theorem~\ref{T:HW 2nd obs} this means that $R_\bull$ induces a pseudoisotopy $F_k$ from the identity to the diffeomorphism $\phi^{\mB_k}$. Using Singh's Theorem~\ref{T:Singh} and the computation of Theorem \ref{T:BB computation}, we find $\Theta([F_k])= (0,1)x^k$. By Theorem~\ref{T:Singhs diffeomorphism result} these are nontrivial and distinct elements of the target group, so $F_k$ are neither isotopic to an isotopy, nor to each other. Moreover, $q$ only identifies $x^k$ with $x^{-k}$, so the classes $q\Theta([F_k])$ remain nontrivial and distinct, proving the claim. 
\end{proof}


\printbibliography


\end{document}